\theoremstyle{definition}
\newtheorem{thm}{Theorem}[section]
\newtheorem{lem}[thm]{Lemma}
\newtheorem*{sclm}{Subclaim}
\newtheorem*{example}{Example}
\newtheorem*{strat1}{Strategy 1}
\newtheorem*{strat2}{Strategy 2}
\providecommand{\tabularnewline}{\\}
\begin{document}
\title{Methods of computing deque sortable permutations given complete and
incomplete information}
\author{Dan Denton\thanks{Advised by Peter Doyle and Scot Drysdale}}
\date{Version 1.04 dated 3 June 2012 (with additional figures dated 6 August 2012)}
\maketitle
\let\thefootnote\relax\footnotetext{Dartmouth Computer Science Technical Report TR2012-719}

\abstractThe problem of determining which permutations can be sorted using 
certain switchyard networks is a venerable problem in computer science
dating back to Knuth in 1968.  In this work, we are interested in permutations which are
sortable on a double-ended queue (called a deque), or on two parallel stacks.  
In 1982, Rosenstiehl and Tarjan
presented an $O\left(n\right)$  algorithm for testing whether a given
permutation was sortable on parallel stacks.  In the same paper, they 
also presented a modification giving $O\left(n\right)$  test for sortability 
on a deque.  We demonstrate a slight error in the
version of their algorithm for testing deque sortability, and present 
a fix for this problem.

The general enumeration problem for both of these classes
of permutations remains unsolved.  What is known is that the growth rate of 
both classes is approximately $\Theta\left(8^n\right)$,
so computing the number of sortable permutations of length $n$, even for small
values of $n$, is difficult to do using any method that must evaluate
each sortable permutation individually. 
As far as we know, the number
of deque sortable permutations was known only up to $n=14$.  This was computed using 
algorithms which effectively generate all sortable permutations.   By using the symmetries 
inherent in the execution of Tarjan's algorithm,
we have developed a new dynamic programming algorithm which can
count the number of sortable permutations in both classes in $O\left(n^5 2^{n}\right)$ time, allowing 
the calculation of the number of deque and parallel stack sortable permutation
for much higher values of $n$ than was previously possible.

Finally, we have examined the problem of trying to sort a
permutation on a deque when the input elements are only revealed 
at the time when they are pushed to the deque.  (Instead of having 
an omniscient view of the input permutation, this corresponds to
encoding the input permutation as a deck of cards which must be
drawn and pushed onto the deque without looking at the 
remaining cards in the deck.)  We show that there are some sortable
permutations which cannot necessarily be sorted correctly on a
deque using only this imperfect information.
{}

\section{Introduction}
In 1968, Donald Knuth first posed the question ``which permutations
can be sorted on certain simple data structures such as stacks and
queues''? \cite{knuth} More generally, these simple data
structure are instances of switchyard networks which take their name
by analogy to railroad switchyards. Switchyard networks consist of
sets of two-way railroad tracks which serve as linear storage elements,
along with one-way railroad track serving as operations for moving
the end element from one storage element to another. Here is a sampling
of well known switchyard networks.

\begin{figure}[ht]
\center{\includegraphics[scale=0.6]{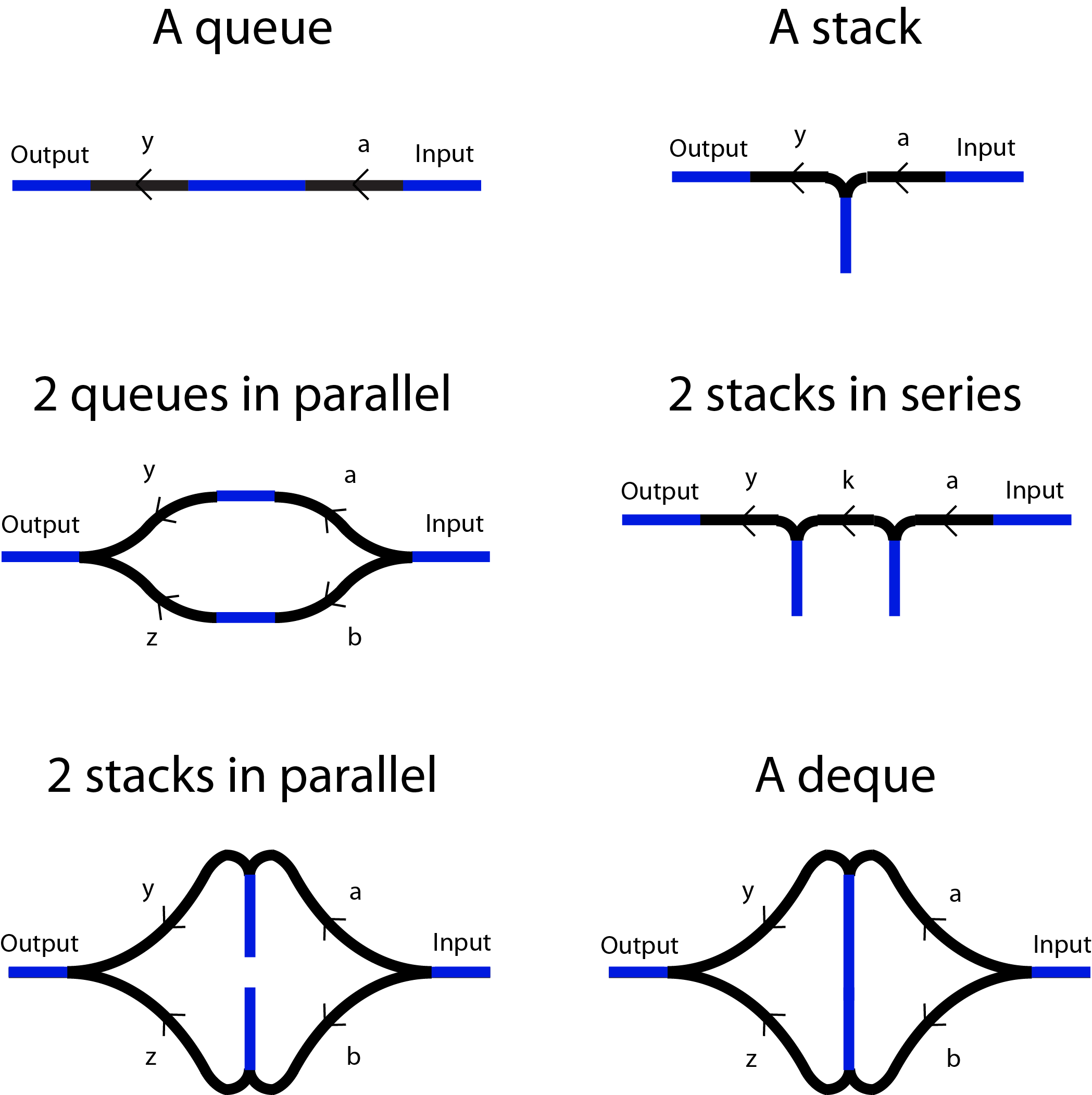}}
\caption{Some small switchyards}
\end{figure}

In the sorting problem posed by Knuth, a permutation $\pi$ initially
sits in the input section of the switchyard. Another section of two-way
track is labeled as the output. The problem, then, is to determine
whether or not the elements of $\pi$ can be moved to the output in
sorted order, using the operations corresponding to the one-way sections
of track. If such a sequence of operations exists, we say that the
permutation $\pi$ can be sorted by the given network.

Some networks are very restrictive in the set of permutations which
they can sort. For example, the only permutation which can be sorted
on a single queue is the identity permutation itself. Alternatively,
it is also possible to construct switchyard networks which are capable
of sorting arbitrary input permutations. Between these two extremes,
however, is a rich variety of sorting capabilities.

In considering the question of which permutations could be computed
on certain switchyard networks (where a permutation is computable
on a network $\mathcal{N}$ if and only if its inverse is sortable
on $\mathcal{N}$), Vaughan Pratt showed that any switchyard network
$\mathcal{N}$ capable of sorting a permutation $\pi$ must also be
capable of sorting every permutation contained in $\pi$, where containment
is defined in the following way. \cite{pratt}

\begin{quotation}
$\overset{def}{=}$ A permutation \underbar{$\pi\in S_{n}$ contains
the permutation $\sigma\in S_{k}$} if and only if $\sigma$ can be
recovered from $\pi$ by removing a (possibly empty) subset of its
elements, and then reducing the values of the remaining elements as
necessary to remove any gaps (so that they consist of exactly the
set $\left\{ 1,2,\ldots,k\right\} $). If $\pi$ does not contain
$\sigma$, we say that \underbar{$\pi$ avoids $\sigma$}.
\end{quotation}

This permutation containment relation is sometimes denoted $\sigma\preceq\pi$,
and it is easy to see that it creates a poset on the set of all permutations.
Pratt showed that the set of permutations which are sortable on a
given switchyard network, viewed as a subset of the poset of all permutations,
is closed under downward containment. Sets of permutations having
this property have since become a major research area, and have been
given their own title.

\begin{quotation}
$\overset{def}{=}$ A \underbar{permutation class}, $\mathcal{C}$,
is a set of permutations such that $\sigma\in\mathcal{C}$ whenever
$\sigma\preceq\pi$ and $\pi\in\mathcal{C}$.
\end{quotation}

Another way of defining a permutation class (also dating back to Pratt),
is to consider the set of minimal permutations not contained in that
class. Such a set, called the basis of $\mathcal{C}$ and denoted
$\text{Bas}\left(\mathcal{C}\right)$, can be used to determine whether
a given permutation is contained in the class $\mathcal{C}$. If $\pi$
contains some element of $\text{Bas}\left(\mathcal{C}\right)$, then
it cannot be in $\mathcal{C}$, since that would imply that every
permutation contained in $\pi$ must also be in $\mathcal{C}$ by
the definition of a permutation class. Conversely, if $\pi$ doesn't
contain any element of the basis then it must be in $\mathcal{C}$,
since otherwise either it or some permutation contained in it must
be a minimal permutation not in $\mathcal{C}$.

The basis of a permutation class is clearly an antichain in the poset
of all permutations. Furthermore, by considering basis with infinite
size, any permutation class can be described by the set of basis permutations
which it avoids. This description of permutation classes as sets of
permutations avoiding certain sets of basis permutations is now the
standard representation. We notate such a class as $\mathcal{C}=\text{Av}\left(\text{Bas}\left(\mathcal{C}\right)\right)$,
and the permutations of length $n$ in $\mathcal{C}$ by $\mathcal{C}_{n}=\text{Av}_{n}\left(\text{Bas}\left(\mathcal{C}\right)\right)$.

\begin{example}
Consider the permutation class which consists of permutations sortable
on a single stack, $\mathcal{C}$. Knuth showed that a permutation
is sortable on a single stack if and only if it avoids the pattern
$231$. Therefore $\mathcal{C}=\text{Av}\left(231\right)$.
\end{example}

In this work, we are interested in two specific permutation classes.
The class of permutations which are sortable on two stacks in parallel,
$\mathcal{C}$, and the class of permutations which are sortable on
a double-ended queue (also called a deque), $\mathcal{D}$. These
are two of the classes which Pratt investigated in his 1973 paper,
and he was able to find the basis of both of these classes.
In each case, the basis is an infinite set which can be described
by the pattern used to construct basis elements of each length.

The basis for the class of parallel stack sortable permutations, $\mathcal{C}$,
consists of permutations having length greater than $3$ and equivalent
to $0$ or $3$ modulo $4$, which fall in the following pattern\begin{align*}
 & 2\ 3\ 4\ 1\\
 & 5\ 2\ 7\ 4\ 1\ 6\ 3\\
 & 2\ 7\ 4\ 1\ 6\ 3\ 8\ 5\\
 & 9\ 2\ 11\ 4\ 1\ 6\ 3\ 8\ 5\ 10\ 7\\
 & 2\ 11\ 4\ 1\ 6\ 3\ 8\ 5\ 10\ 7\ 12\ 9\\
 & \vdots\end{align*}

Similarly, the basis of the class of deque sortable permutations,
$\mathcal{D}$, consist of four permutations of each odd length greater
than $4$. One representative of each set of four falls in the following
pattern\begin{align*}
 & 5\ 2\ 3\ 4\ 1\\
 & 5\ 2\ 7\ 4\ 1\ 6\ 3\\
 & 9\ 2\ 7\ 4\ 1\ 6\ 3\ 8\ 5\\
 & 9\ 2\ 11\ 4\ 1\ 6\ 3\ 8\ 5\ 10\ 7\\
 & 13\ 2\ 11\ 4\ 1\ 6\ 3\ 8\ 5\ 10\ 7\ 12\ 9\\
 & 13\ 2\ 15\ 4\ 1\ 6\ 3\ 8\ 5\ 10\ 7\ 12\ 9\ 14\ 11\\
 & \vdots\end{align*}
The other three basis patterns of each length can be recovered by
some combination of interchanging the first two elements of the permutation,
and interchanging the largest two elements of the permutation.

Notice that every odd length pattern from $\text{Bas}\left(\mathcal{C}\right)$
is represented in $\text{Bas}\left(\mathcal{D}\right)$. This should
not surprise us, since sortability on parallel stacks and on a deque
are closely linked concepts. In fact, we can view a deque switchyard
network as being just a parallel stack switchyard network in which
the bottoms of the two stacks have been joined together to form a
single linear storage element. Clearly, the permutations which are
sortable on a deque are a superset of of those sortable on parallel
stacks.

When we set about the investigation leading to this work, our interest
was primarily in the permutation class $\mathcal{D}$, the permutations
which are sortable on a deque. However, we address $\mathcal{C}$
as well, since most of our results for $\mathcal{D}$ contain simplifications
which pertain to $\mathcal{C}$.

\section{The Enumeration Problem}
One question can be asked about a given permutation class is,
``how many permutations of length $n$ are in the class''? Even
though Pratt provided a full desription of the permutation classes
$\mathcal{C}$ and $\mathcal{D}$ by giving their basis patterns,
such a description says almost nothing about the number of permutations
in the classes of various sizes.

When presented with the task of enumerating a sequence, such as the
number of permutations in a given permutation class having length
$n$ for $n=1,2,3,\ldots$, there are several different forms that
an answer can take.

The most satisfying answer would be an explicit closed form formula
as a function of $n$. For example, it has been shown that the number
of permutations of length $n$ which can be sorted on a single stack
is the $n$th Catalan number. (That is, $\left|\text{Av}_{n}\left(231\right)\right|=C_{n}$.)

Another desirable answer is a generating function whose coefficients
count the desired sequence. Generating functions have been found for
several permutation classes for which closed form formulas are not
known.

Without a closed formula or a generating function, one is left with
asymptotic analysis for an inexact view of the long term behavior
of the sequence, and with algorithms for calculating the $n$th term
for an exact view of a limited number of terms at the beginning of
the sequence.

The problem of enumerating the sequences $\left|\mathcal{C}_{1}\right|,\left|\mathcal{C}_{2}\right|,\ldots$
and $\left|\mathcal{D}_{1}\right|,\left|\mathcal{D}_{2}\right|,\ldots$
with a closed form solution or a generating function has gone unsolved
for 40 years. \cite{bona} Much of the work that has
been done in the enumeration of these two classes has been devoted
to studying their asymptotic behavior.

We know that every permutation class having a nonempty basis has a
growth rate that is at most exponential. Furthermore, every permutation
class, $\mathcal{B}$, describing permutations which are sortable on
some switchyard network is supermultiplicative (the number of sortable
permutations of length $m+n$ is greater than or equal to the product of the number
of sortable permutations of length $m$ and the number of sortable
permutations of length $n$), which implies that the limit
\begin{align*}
\lim_{n\rightarrow\infty}\left|\mathcal{B}_{n}\right|^{\frac{1}{n}}
\end{align*}
is well defined. This limit is know as the growth rate of the permutation
class and is denoted $\text{gr}\left(\mathcal{B}\right)$. The sequence
enumerating the number of in-class permutations of each length then
grows like $\left(\text{gr}\left(\mathcal{B}\right)\right)^{n}$.

Neither $\text{gr}\left(\mathcal{C}\right)$ nor $\text{gr}\left(\mathcal{D}\right)$
is known exactly, but the best known bounds, found by Albert, Atkinson, and Linton
in 2009, give a very good estimate of what these growth rates must
be:

\begin{center}
\begin{tabular}{|c|c|c|}
\hline 
 & lower bound & upper bound\tabularnewline
\hline
\hline 
$\text{gr}\left(\mathcal{C}\right)$ & 7.535  & 8.3461\tabularnewline
\hline 
$\text{gr}\left(\mathcal{D}\right)$ & 7.890 & 8.352\tabularnewline
\hline
\end{tabular}\cite{albert}
\par\end{center}

Notice that, asymptotically, the number of permutations which are
sortable on parallel stacks must be very close to the number of permutations
which are sortable on a deque. In fact, Albert et al. have conjectured
that the growth rates of these permutations may be equal.

In contrast to the investigation of the growth rates of these permutation
classes, it seems that comparatively less work has been done on the
problem of developing algorithms to calculate the terms of the sequence
explicitly. The first twelve terms of $\left|\mathcal{D}_{1}\right|,\left|\mathcal{D}_{2}\right|,\ldots$
were known to Flajolet, Salvy, and Zimmermann in 1989 \cite{flajolet}.
In April of 2012, Zimmermann posted the first fourteen terms of this
sequence on the online encyclopedia of integer sequences (http://oeis.org/A182216),
along with a C program designed to compute these terms. Zimmermann's
program works by constructing words out of the alphabet of operations
available to the deque switchyard (the alphabet $\left\{ a,b,y,z\right\} $)
and determining which permutations are sorted by these words. Zimmermann
uses some relations in order to avoid enumerating all $16^{n}$ possible
words of length $2n$, but even so, this approach has an exponential
runtime whose base is strictly greater than $\text{gr}\left(\mathcal{D}\right)$. 

The problem of enumerating the sequence $\left|\mathcal{C}_{1}\right|,\left|\mathcal{C}_{2}\right|,\ldots$
is even less well known, and to the best of our knowledge, there are
no known algorithms for computing it in less than $\omega\left(\text{gr}\left(\mathcal{C}\right)\right)$.
We do not know how many terms of this sequence are currently known.

In this work, we will provide two new algorithms for computing the
leading terms of these sequences. The first, which employs a parallel 
stack/deque sortability testing algorithm by Rosenstiehl and Tarjan has a
runtime of $\Theta\left(n^{2}X^{n}\right)$ where $X$ is equal to
the growth rate of the relevant class. Modulo the sub-exponential factor
$n^{2}$, this is optimal among algorithms which must consider each
sortable permutation. By harnessing symmetries inherent in the execution
of the Rosenstiehl-Tarjan algorithm, however, we have developed a
second algorithm with a runtime of $O\left(n^{5}2^{n}\right)$. The
next several sections of this work are devoted to discussion of these
algorithms.

\section{The Rosenstiehl-Tarjan Algorithm for Parallel Stacks}

Suppose we are given some permutation $\pi$, and we wish to determine
whether $\pi$ belongs to the permutation class $\mathcal{C}$. We
call this the membership testing problem. In 1982, Rosenstiehl and
Tarjan presented an algorithm which can answer this question in linear time ($O\left(n\right)$ where $n$ is the length of $\pi$). \cite{tarjan} (This runtime
is optimal among approaches which must read a constant fraction of
the permutation $\pi$ to determine its membership.) Rosenstiehl and
Tarjan's algorithm works by using a data structure, which they call
a pile of twinstacks, which simultaneously records all possible configurations
of the parallel stack switchyard network throughout the process of trying
to sort $\pi$. Since we make extensive use of Rosenstiehl and Tarjan's
algorithm, we present it here in its entirety. We begin with a definition
of the fundamental data-structure unit used by the algorithm.

\begin{quotation}
$\overset{def}{=}$ Let a \underbar{twinstack}, $\left[L,R\right]$,
be a pair of stacks, called the left stack and the right stack, each
of which contains permutation elements in strictly increasing order
from top to bottom. A proper twinstack must always have at least one
of its stack nonempty.
\end{quotation}

The Rosenstiehl and Tarjan algorithm represents the current state
of two parallel stacks by a stack of twinstack, which Rosenstiehl
and Tarjan call a pile of twinstacks. Each twinstack in the pile can
be subject to several operations. A \underbar{reversal} swaps the
left and right stacks. A \underbar{weld} combines the top two twinstacks
into a single twinstack by concatenating their left stacks to form
the new left stack, and by concatenating their right stacks to form
a new right stack.

Clearly, we cannot allow welding in cases where a larger element would
be concatenated on top of a stack containing a smaller element (since
this violates our definition of a twinstack). In fact, in general
we would like to maintain the even stronger condition that each element
contained in a given twinstack is smaller than every element
in every twinstack below that twinstack. A pile of twinstacks for
which this property holds is called \underbar{normal}.

The intuition behind the Rosenstiehl-Tarjan algorithm is that each
twinstack represents a degree of freedom in the positioning of the
elements among the two parallel stacks. A normal pile of $k$ twinstacks
represents $2^{k}$ different configurations of parallel stacks.  These
can be recovered by choosing one of $2^{k}$ different subsets of
the twinstacks in the pile and reversing them, and then welding down
the entire pile. (By welding down the pile, we mean applying successive
weld operations to the top pair of twinstacks on the pile until only
a single twinstack remains.)

The Rosenstiehl-Tarjan algorithm processes the elements of the permutation
in order, maintaining a normal pile of twinstacks which represents
all of the elements currently received from the input but not yet
sent to the output. In each iteration of the algorithm, we first receive
the next element, $i$, from the input and place it in its own twinstack
on top of the pile (the twinstack $\left[i,-\right]$). We then attempt
to normalize the pile, in case the addition of this new twinstack
caused the pile to no longer be normal.

Normalization Step: Notice that the only element which could possibly
be larger than any element in a lower twinstack is the new element
$i$ (since the pile would have been normalized during the previous
iteration).  Thus, we compare the element $i$ to the top element(s) of the
stacks of the second twinstack, resulting in one of the following
cases:
\begin{itemize}
\item If $i$ is smaller than any top elements in the second twinstack,
then the pile is already normalized, so return from the normalization
step.
\item If $i$ is smaller than one top element of the second twinstack, but
larger than another top element, reverse the twinstack containing
$i$ in order to position it over the side of the second twinstack
whose top element is larger than $i$ and then weld.
After the weld, the new top twinstack contains an element $j$ which
is larger than $i$. Since we know that $j$ is not larger than any
elements in lower twinstacks, the whole pile must be normalized. Return
from the normalization step.
\item If $i$ is larger than one top element of the second twinstack and
the other side of the second twinstack is empty, reverse the twinstack
containing $i$ in order to position it over the empty side of the
second twinstack and then weld it. At this point, $i$ may or may
not be larger than some element in the new second twinstack (previously
the third twinstack), so repeat the normalization step.
\item If $i$ is larger than both top elements of the second twinstack,
abort the algorithm (the given permutation is not sortable).
\end{itemize}
Finally, after successfully normalizing the pile, we examine the top elements of
the top twinstack and move one of them to the output if it is the next
element belonging there. If this causes the only nonempty stack of
the top twinstack to become empty, remove it. Repeat this process
until no more elements can be moved to the output.

An execution of the Rosenstiehl-Tarjan algorithm has two possible
results. One possibility is that the the algorithm returns false because
it was at some point unable to normalize the pile of twinstacks. (Intuitively,
this corresponds to determining that two elements $j$ and $k$ must
be on opposite stacks (as represented by having them on opposite sides
of the same twinstack), both of which are still in the stacks when
a new larger element $i$ arrives from the input. Whichever stack
$i$ is placed on, it must necessarily pin $j$ or $k$ underneath
it, preventing that element from ever making it to the output.) The
second possibility is that, after $n$ iterations of the algorithm,
every element has been moved from the input and into the output in
sorted order. In this case the algorithm returns true.

It is worth noting that we can also recover the sequence of operations
for sorting a permutation which the Rosenstiehl-Tarjan algorithm deems
sortable. However, for our purposes we are only interested
in the boolean result telling whether or not the given permutation
is sortable.

\section{The Rosenstiehl-Tarjan Algorithm for Deques}

After the main results of their paper, Rosenstiehl and Tarjan also provided, as an aside,
 a modification of their algorithm to allow testing sortability on deques.
They write:

\begin{quotation}
We use the same algorithm as in the case of twin stacks, except
that we process an element $i$ larger than anything on the
{[}pile of twinstacks{]} as follows. Add a new twinstack $\left[i,-\right]$
to the \emph{bottom} of the {[}pile of twinstacks{]}. If any twinstack
$\left[L_{i},R_{i}\right]$ has both $L_{i}$ and $R_{i}$ nonempty,
abort. Otherwise, reverse as necessary to make all the $R_{i}$s empty,
and weld all the twinstacks in the {[}pile of twinstacks{]}. \cite{tarjan}
\end{quotation}

The intuition behind this modification is simple. Whenever we add
a new element from the input to a deque, it is safe to add that element
as long as the deque can be arranged in monotonic order. Clearly,
if all of the twinstacks on the pile have one empty side, then there
is an arrangement of elements which is monotonic. If the standard
parallel stack sortability algorithm was used to add a new maximal
element $i$ at this point, this would result in the entire pile of
twinstacks being welded together with $i$ on one side, and all of
the previous contents of the twinstack on the other. Clearly the deque
is still monotonic, but this is no longer evinced by the absence
of double-sided twinstacks. Thus if we were to subsequently
add another larger element, we would fail during the normalization
step. 

Rosenstiehl and Tarjan's approach, therefore, is to essentially tuck
the element $i$ underneath the side stack containing all of the other
elements after welding. Thus the pile continues to contain only one-sided
twinstacks. 

We have found, however, that there is a small error in the modification
that Rosenstiehl and Tarjan give in their paper. Notice that their
algorithm will return false whenever a new maximal element is
received from the input at a time when the pile contains a double-sided
twinstack. Thus their algorithm depends on the invariant that the
pile never contains a double-sided twinstack as long as it can represent
a monotonic deque state.

Whenever a normalized pile contains a double-sided twinstack apart from
the bottom twinstack, then every possible state of the deque
must necessarily be non-monotonic. Similarly, if the bottom twinstack
is double-sided, and both sides contain more than one element, or
there is an element which is larger than both top elements, then the
deque must necessarily be non-monotonic. Therefore, the problem case
we must watch to avoid is where only the bottom twinstack is double-sided,
and one side of this twinstack contains a single element larger than
every element on the other side. \emph{This is the only case where the pile
can contain a double-sided twinstack while simultaneously representing
a monotonic deque state.}

The special treatment that the modification gives to the introduction
of a new maximal element ensures that the algorithm never creates
a double-sided twinstack so long as the deque remains monotonic. However,
this does not protect against the case where the popping of an element
to the output causes a deque to become monotonic. It is possible that,
in a non-monotonic state, the pile can contain a double-sided twinstack.
Then, by popping an element to the output, the state can become monotonic
without removing the double-sided twinstack.

To give a concrete example, consider the permutation $254163$. This
is clearly deque-sortable:

\begin{figure}[ht]
\center{\includegraphics[scale=0.33]{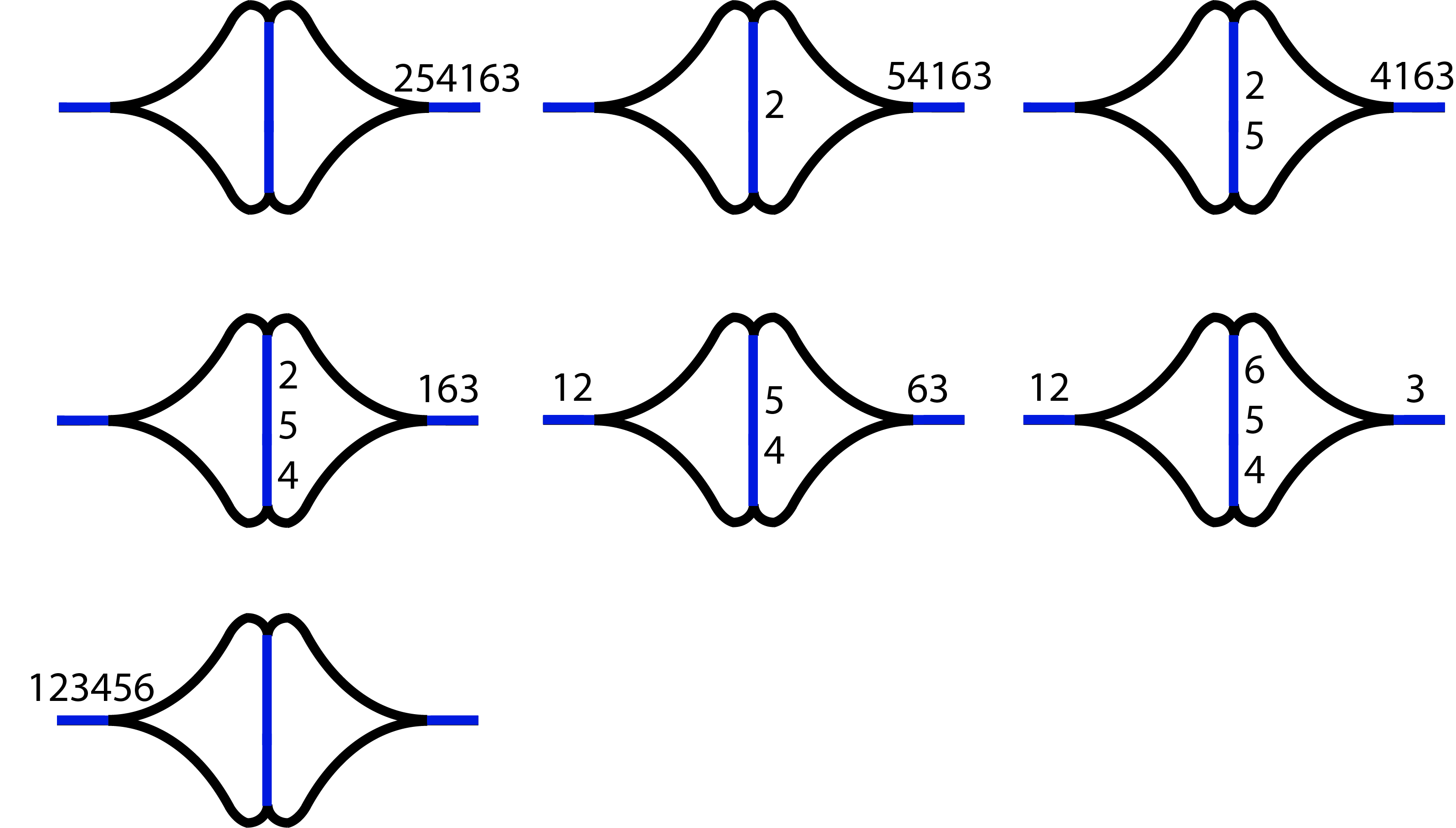}}
\caption{Sorting 254163}
\end{figure}

Now consider running this through the stated version of the algorithm:

\begin{center}
\begin{tabular}{|c|c|c|}
\hline 
Output: & Pile of twinstacks: & Input:\tabularnewline
\hline
\hline 
 &  & $254163$\tabularnewline
\hline 
 & $\begin{array}{c}
\\\left(\begin{array}{cc}
2 & \end{array}\right)\\
\\\end{array}$ & $54163$\tabularnewline
\hline 
 & $\begin{array}{c}
\\\left(\begin{array}{cc}
2 &\\
5 &\end{array}\right)\\
\\\end{array}$ & $4163$\tabularnewline
\hline 
 & $\begin{array}{c}
\\\left(\begin{array}{cc}
2\\
5 & 4\end{array}\right)\\
\\\end{array}$ & $163$\tabularnewline
\hline 
$12$ & $\begin{array}{c}
\\\left(\begin{array}{cc}
4 & 5\end{array}\right)\\
\\\end{array}$ & $63$\tabularnewline
\hline 
 & Abort! & \tabularnewline
\hline
\end{tabular}
\par\end{center}

The problem here is clearly that popping the element $2$ to the output
results in the state becoming monotonic, even though the bottom (and
only) twinstack remains double-sided. The fix for this problem is
very simple. Whenever we pop an element from the bottom twinstack,
we need to check if the resulting state is monotonic. If it is, we
rearrange the bottom twinstack as necessary (by tucking the largest
element at the bottom of the stack containing the other element) to
make it one-sided.

\section{Correctness of Rosenstiehl-Tarjan-Modified}
Since Rosenstiel and Tarjan did not give a full proof of correctness
of their algorithm for testing sortability on a deque, and since we
have shown that some modifications need to be made to to fix this
algorithm, it should be worthwhile to take the time to fully prove
the correctness of the new version of the algorithm which we will call
Rosenstiehl-Tarjan-Modified. We intend to prove this by considering
a mapping relating the states of a run of Rosenstiehl-Tarjan-Modified
to the states of a sorting run on an actual deque. Therefore, we will
begin by examining what these states are.

The \underbar{state} of a deque switchyard can be thought of as consisting of
three lists. The first list is the output, which contains all the elements
which have already been popped from the deque in the order in which
they were popped. The second list is the deque itself. This list contains
the elements which, at the current point in the run, have already been
pushed from the input onto the deque, but have not yet been popped.
The final list comprising the deque switchyard state is the input
list. This list contains a suffix of the input permutation consisting
of all those elements which have not yet been pushed onto the deque.

At all times, the combined three lists of the deque state contain all $n$
elements of the input permutation $\pi$. The transition
rules are governed by the four operations allowed in sorting on a
deque. Whenever the input list is nonempty, we are allowed to take
operation $a$, by removing the first element of the input list and
adding it to the left end of the deque list. Alternatively, we can
make a state transition by taking operation $b$: removing the first
element of the input list and adding it to the right end of the deque
list. The other two operations, $y$ and $z$, involve removing the
left or right end element of a nonempty deque list, and placing the
removed element at the end of the output list.

Let $\mathfrak{D}$ be the set of all states of a deque switchyard
containing $n$ elements. Then each of the operations $a,b,y,$ and
$z$ defines a map on a subset of $\mathfrak{D}$ into $\mathfrak{D}$.
Alternatively, we can view $\mathfrak{D}$ as the vertex set of a simple acyclic directed
graph, where each vertex has between zero and four out-edges, labeled with 
the operations from $\left\{a,b,y,z\right\}$ corresponding to the represented transitions.  
Note that some edges may have multiple labels since, for example, the 
operations $a$ and $b$ correspond to the same state transition
whenever the deque list is empty.  Alternatively, some operations may not be
represented among the labels on the out-edges from some nodes.
This is the case for the operations $y$ and $z$ for any state whose
deque list is empty.  Also note that this graph represents all possible states
for a deque switchyard containing $n$ elements.  This includes many
states in which the output contains elements which are out of order.

We speak of a \underbar{run
of a permutation $\pi$} on a deque switchyard to refer to a walk on
this directed graph, starting at the the state where the output and
deque are empty and the input list contains the full permutation $\pi$.
The run then consists of a series of states connected with edges
each of which is labeled with at least one of the operations $a,b,y,$ and
$z$. A run is successful if
it takes $2n$ steps and then ends at the unique state containing
the identity permutation $1\ldots n$ in the output list. A permutation $\pi$
is sortable on a deque if and only if there is a successful run of
that permutation on a deque.

Since the out-edges from each vertex of $\mathfrak{D}$ are each labeled
with a nonempty subset of $\lbrace a,b,y,z \rbrace$, for each run of a permutation $\pi$ on a deque switchyard we can
can construct a corresponding word in the alphabet $\lbrace a,b,y,z\rbrace $ by choosing 
one letter from the label set for each edge along the run.
Furthermore, given such a word, we can easily determine whether
it corresponds to a valid run (in terms of only
selecting operations which are available at a given state). A word
in the alphabet $\left\{ a,b,y,z\right\} $ represents a valid run
of a permutation $\pi$ or length $n$ if and only if it contains
at most $n$ combined occurrence of the letters $a$ and $b$, and
every prefix of the word contains at least as many combined occurrence
of $a$ and $b$ as of $y$ and $z$. 

\begin{quotation}
$\overset{def}{=}$ We call a run of the permutation $\pi$ on
a deque switchyard \underbar{reduced} if every state in which an element
$i$ at one of the ends of the deque is the next element required by
the output is followed by a state in which that element $i$ has been
popped from the deque and moved to the output.
\end{quotation}

We are interested in reduced runs because it is convenient to design
our algorithms to only explore reduced runs, by sequentially moving
one element from the input to the deque, and then moving elements
from the deque to the output until we are unable to continue to do
so. The following lemma shows that this choice is not restrictive.

\begin{lem}
Every permutation $\pi$ which can be sorted on a deque can be sorted
using a reduced run on a deque.
\end{lem}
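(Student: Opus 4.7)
The plan is a standard exchange argument. Let $R = o_1 o_2 \cdots o_{2n}$ be an arbitrary successful run of $\pi$. I would show that if $R$ is not reduced, it can be rewritten into another successful run $R'$ whose \emph{reduced prefix} (the longest initial segment obeying the reduced property) is strictly longer than that of $R$. Iterating this rewrite terminates because the reduced prefix is bounded by $2n$, at which point the result is a reduced successful run.

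To build $R'$, let $k$ be the first index where $R$ violates the reduced condition: at state $s_k$ some element $i$ lies at an end of the deque and is the next element required by the output, but $o_{k+1}$ does not pop $i$. Because $R$ is successful, $i$ must be popped later; let $p > k+1$ be the unique index for which $o_p$ is that pop. The first key observation is that every $o_m$ with $k < m < p$ must be a push, not a pop: $i$ remains the next element required by the output throughout the interval, so any intervening pop would place some element larger than $i$ into the output before $i$ itself and spoil the sort. The second key observation is that none of these intervening pushes may land on the end occupied by $i$: if one did, the covering element would remain on top of $i$ (no pops occur in the interval) and $i$ could not be at an end in state $s_{p-1}$, contradicting the fact that $o_p$ pops it.

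With these observations in hand, form $R'$ by inserting a pop $o'$ of $i$ as a new $(k{+}1)$st operation and deleting the former pop $o_p$. The inserted pop is legal at $s_k$ since $i$ is at an end. The retained pushes $o_{k+1}, \ldots, o_{p-1}$ are still legal in $R'$ because their availability depends only on the input list, which the inserted pop does not touch; and since these pushes all land on the end opposite $i$, they commute with the removal of $i$, so the deque state after $o_1 \cdots o_k\, o'\, o_{k+1} \cdots o_{p-1}$ in $R'$ agrees with the state after $o_1 \cdots o_p$ in $R$. From that point on $R'$ copies $R$ verbatim, so it is successful; by construction it pops $i$ at step $k+1$, so its reduced prefix strictly exceeds that of $R$.

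The main obstacle I anticipate is handling the boundary case where the deque at state $s_k$ contains $i$ as its only element, so $i$ is simultaneously at both ends and the phrase ``the end opposite $i$'' is a priori ambiguous. The cleanest way to deal with this is to commit once and for all to the end $E$ from which $o_p$ eventually removes $i$, and to argue, using the same ``no covering'' reasoning, that every intermediate push must land on the end opposite $E$; this unifies the treatment with the general case and makes the commutation used above literally correct.
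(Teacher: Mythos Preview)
Your proof is correct and follows essentially the same approach as the paper's: both argue that between the first missed opportunity to pop $i$ and the step where $i$ is actually popped, every intervening operation is a push to the opposite end (since no pops can occur while $i$ is still awaited, and any push to $i$'s side would block the eventual pop), so the pop of $i$ can be moved forward. Your termination measure (length of the reduced prefix) differs cosmetically from the paper's (number of elements not popped at the first opportunity), and you are somewhat more careful about the singleton-deque edge case, but the core exchange argument is the same.
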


\begin{proof}
Suppose that we are given some permutation, $\pi$, that is sortable
on a deque. Then there exists some successful run sorting that permutation.
Call this run $r$.

Let $\omega$ be a word in the alphabet $\left\{ a,b,y,z\right\} $
which corresponds to $r$. (Recall that there can be multiple such
words, but there must always be at least one such word.) Suppose that
$r$ is not a reduced run. Then there is some element, $i$, which
is not moved to the output as soon as possible. However, since $r$
is a successful run, $i$ must be moved to the output eventually. 

Let $\omega_{j}$ be the letter in the word $\omega$ which corresponds
to the state transition wherein $i$ is moved to the output. $\omega_{j}$
must be either $y$ or $z$. Suppose that $\omega_{j}=y$. Consider
the first opportunity to move $i$ to the output. At no point in-between
then and the step corresponding to $\omega_{j}$ can there be any
element to the left of $i$. This is because, as soon as we are ready to
move $i$ to the output, all elements from $1$ through $\left(i-1\right)$
have already been moved to the output, so there will not be any $y$
or $z$ operations preceding the one which outputs $i$.  Since the left 
side of $i$ is free at the time corresponding to $\omega_{j}$, it must have been
free for the entire intervening period. Thus we can construct a new run, $r'$, 
by moving the $y$
operation which takes $i$ to the output to the earliest possible
opportunity.  (The same result holds for $\omega_{j}=z$ by symmetry.)

The new run, $r'$, has one fewer elements which is not popped at
the first opportunity than $r$ did. We can repeat this process until
we arrive at a run which has no elements which are not popped at the
first opportunity. This resulting run is reduced. Therefore, the arbitrary
sortable permutation $\pi$ can be sorted using a reduced run. 
\end{proof}

We would also like to consider what can cause a run to not be successful.
Notice that, whenever the state of the deque switchyard is such that
there is an element $i$ on the deque which is sandwiched between
two larger element, then there is no successful run including this
state. This is because one of the two sandwiching elements must be
moved to the output before $i$ can be moved, but this will result
in the output being unsorted. We call such states sandwich states,
and we seek successful runs among those runs which avoid these sandwich
states.

We are now ready to consider the states of the Rosenstiehl-Tarjan-Modified
algorithm, and the mapping which takes them to reduced runs on the
deque. The state of Rosenstiehl-Tarjan-Modified also consists of three
parts. The output is again a list of elements which have been popped
in the order in which they were popped. The input is again a list
containing a suffix of $\pi$ with all those elements which have not
yet been pushed. Instead of a deque list, however, the third element
of the Rosenstiehl-Tarjan-Modified state is the stack of twinstacks called the pile.
Like the states of
the deque switchyard, the stacks and lists of the Rosenstiehl-Tarjan-Modified
algorithm always contain all $n$ permutation elements. Let $\mathfrak{R}$
denote the set of all possible states of the algorithm.

Here is the psuedocode of the Rosenstiehl-Tarjan-Modified algorithm:

\begin{minipage}{\linewidth} 
\begin{algorithm}[H]
\SetAlgoRefName{FromInput$(O,P,I)$}
\DontPrintSemicolon   
\ProcSty{Procedure} \FromInput{$O,P,I$}\;

$x=I.dequeue()$\;
add the new twinstack $(x,-)$ to the top of $P$\;
\Return{}
\end{algorithm} 
\end{minipage}

\begin{minipage}{\linewidth} 
\begin{algorithm}[H]
\SetAlgoRefName{Normalize$(O,P,I)$}
\DontPrintSemicolon   
\ProcSty{Procedure} \Normalize{$O,P,I$}\;
$topTStack=P.top()$\;
$secondTStack=P.second()$\;
\If{secondTStack==NIL}{
  \Return{TRUE}
}
$x=topTStack.left().top()$\;
\tcc{note that $x$ is the only element in $topTStack$ which can possibly be greater than some element in $secondTStack$}
\Switch{}{
  \uCase{secondTStack.right() nonempty and x less than both top elements of secondTStack}{
    \Return{TRUE}
  }
  \uCase{secondTStack.right() nonempty and x in-between the two top element of secondTStack}{
    weld down\;
    \Return{TRUE}
  }
  \uCase{secondTStack.right() nonempty and x greater than both top elements of secondTStack}{
    \Return{FALSE}
  }
  \uCase{secondTStack.right() empty and x less than secondTStack.left().top()}{
    \Return{TRUE}
  }
  \Case{secondTStack.right() empty and x greater than secondTStack.left().top()}{
    \If{secondTStack is the bottom twinstack and x is larger than every element in secondTStack}{
      place $x$ at the bottom of the nonempty side of $secondTStack$\;
      reverse $topTStack$ and weld down\;
    }
    \Else{
      reverse $secondTStack$ and weld down\;
      \Return{\Normalize{$O,P,I$}}
    }
  }
}
\end{algorithm} 
\end{minipage}

\begin{minipage}{\linewidth} 
\begin{algorithm}[H]
\SetAlgoRefName{ToOutput$(O,P,I)$}
\DontPrintSemicolon   
\ProcSty{Procedure} \ToOutput{$O,P,I$}\;

$topTStack=P.top()$\;
\If{topTStack.right().top()==O.last()+1}{
  $O.enqueue(topTStack.right().pop())$\;
  reverse $topTStack$ if necessary to put the largest top element on the left stack\;
  \If{the pop was from the bottom twinstack and it caused that twinstack to become monotonic}{
    reorganize $topTStack$ to be one-sided, reflecting its monotonicity\;
  }
  \ToOutput{$O,P,I$}\;
  \Return{}
}
\ElseIf{topTStack.left().top()==O.last()+1}{
  $O.enqueue(topTStack.left().pop())$\;
  pop $topTStack$ if it is now empty\;
  otherwise reverse $topTStack$ if necessary to put the largest top element on the left stack\;
  \ToOutput{$O,P,I$}\;
  \Return{}
}
\Else{
  \Return{}
}
\end{algorithm} 
\end{minipage}

\begin{minipage}{\linewidth} 
\begin{algorithm}[H]
\SetAlgoRefName{RosenstiehlTarjanModified$(O,P,I)$}
\DontPrintSemicolon   
\ProcSty{Procedure} \RosenstiehlTarjanModified{$O,P,I$}\;
\While{{\bf not} I.isEmpty()}{
  \FromInput{$O,P,I$}\;
  \If{{\bf not} \Normalize{$O,P,I$}}{
    \Return{FALSE}
  }
  \ToOutput{$O,P,I$}\;
}
\Return{TRUE}
\end{algorithm} 
\end{minipage}

As discussed previously, the idea behind the Rosenstiehl-Tarjan-Modified
algorithm is that we simultaneously represent several possible states
of a deque sorting attempt by representing degrees of freedom with
the ability to reverse each twinstack on the pile independently. Each
twinstack is meant to hold elements which belong ``outside of''
the elements in the twinstacks below it. If we were to construct
an actual deque list from the pile of twinstacks, we would choose
an orientation for each twinstack. Starting with the empty deque
and the bottom twinstack, we add the elements from the left side of
the twinstack to the left side of the deque and the elements from
the right side of the twinstack to the right side of the deque if
we selected the default orientation. Alternatively, if we select
the reversed orientation, then we put the left stack elements on
the right side of the deque and the right stack element on the left
side of the deque. Clearly, the orientation that is chosen for the
bottom twinstack may or may not matter (depending on whether
the bottom twinstack contains one or several elements), but each change of orientation
for a non-bottom twinstack results in a different deque state.

We use the term \underbar{realization} to refer to this process of choosing orientations
for the twinstacks in the pile and turning them into a deque. Every
pile containing $2^{k}$ twinstacks can be realized as either $2^{k}$
or $2^{k-1}$ different deques, depending on whether the bottom twinstack
contains exactly one permutation element. Since we require every twinstack
to be nonempty, there can only be a maximum of $n$ twinstacks (and
if there are $n$ twinstacks, then the bottom twinstack contains exactly
one element). Thus every pile can be realized by at most $2^{n-1}$
different deques. This process of realization is the mapping which
we will use to prove the correctness of the Rosenstiehl-Tarjan-Modified
algorithm. Let $\phi$ be a mapping
\begin{align*}
\phi:\left(\mathbb{Z}/2\mathbb{Z}\right)^{n-1}\times\mathfrak{R}\rightarrow\mathfrak{D}\end{align*}
defined by choosing an orientation for every stack starting with the
bottommost stack whose orientation matters (the bottommost stack if it has two elements and the second from the bottom otherwise), and then combining
their elements as described into a single deque list.

\begin{quotation}
$\overset{def}{=}$ We call $\phi$ the \underbar{realization mapping}.  We
say that a state $d\in\mathfrak{D}$ \underbar{is a realization of} $r\in\mathfrak{R}$
if there exists some $\alpha\in\left(\mathbb{Z}/2\mathbb{Z}\right)^{n-1}$
such that $d=\phi\left(\alpha,r\right)$.
\end{quotation}

Just like we define runs of a deque switchyard as walks on a graph
with vertices in $\mathfrak{D}$, we would like to define runs of the
Rosenstiehl-Tarjan-Modified algorithm to be walks on a simple acyclic directed graph with
vertices in $\mathfrak{R}$. The edges, in this case, represent the states 
transitions that can be accomplished during the course of execution of the
Rosenstiehl-Tarjan-Modified algorithm.  (Since the algorithm is deterministic,
every vertex has at most one out-edge, and the entire run is determined
by the choice of starting vertex.)  Once again, a successful run will be a
walk starting with the permutation $\pi$ in the input list, and ending
after $2n$ steps with the identity permutation in the output list.

We now generalize the notion of a realization from single states to entire runs.

\begin{quotation}
$\overset{def}{=}$ We say that a deque run $d_{0},\ldots,d_{k'}$ is
\underbar{a realization of} a Rosenstiehl-Tarjan-Modified run $r_{0},\ldots,r_{k}$
if it is a valid deque run, and there exists a sequence
of binary numbers $\alpha_{0},\ldots,\alpha_{k}\in\left(\mathbb{Z}/2\mathbb{Z}\right)^{n-1}$ such that the sequence $\phi\left(\alpha_{0},r_{0}\right),\ldots,\phi\left(\alpha_{k},r_{k}\right)$ is equal to $d_{0},\ldots,d_{k'}$ except possibly with repetitions of states.
\end{quotation}

\begin{lem}
If $r=r_{0},r_{2},\ldots,r_{k}$ is any run of the Rosenstiehl-Tarjan-Modified
algorithm starting with $\pi$ in the input and ending after $k$
steps at $r_{k}\in\mathfrak{R}$, then for any $\alpha\in\left(\mathbb{Z}/2\mathbb{Z}\right)^{n-1}$
there exists a realization of $r$ which ends at the state $\phi\left(\alpha,r_{k}\right)\in\mathfrak{D}$.
\end{lem}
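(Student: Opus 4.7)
The plan is to prove this by induction on $k$, the number of steps in the Rosenstiehl-Tarjan-Modified run. The base case $k=0$ is immediate: the initial state $r_0$ has empty output, empty pile, and $\pi$ in the input, so $\phi(\alpha,r_0)$ is the same deque-switchyard state for every $\alpha$, namely the unique starting state of any deque run of $\pi$, and this zero-step deque run realizes $r_0$.

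For the inductive step, I would perform a case analysis on the type of transition from $r_k$ to $r_{k+1}$, mirroring the three procedures in the pseudocode: a FromInput step, a Normalize-internal step (one of the weld, reverse-then-weld, or the ``bottom tuck'' subcases introduced by the fix), or a ToOutput step. In each case, the task is, given $\alpha$ determining $\phi(\alpha,r_{k+1})$, to exhibit an $\alpha'$ such that $\phi(\alpha',r_k)$ is either equal to $\phi(\alpha,r_{k+1})$ (so that the deque run is extended by a repetition, allowed by the ``except possibly with repetitions'' clause) or is connected to it by a single legal deque operation. Invoking the inductive hypothesis with $\alpha'$ produces the realization of $r_0,\ldots,r_k$ whose concatenation with this move yields the required realization of $r_0,\ldots,r_{k+1}$.

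For the FromInput step, a singleton twinstack $[x,-]$ is pushed onto the top of the pile, and the orientation bit that $\alpha$ assigns to this new top twinstack determines whether $x$ sits at the left or right end of the deque in the realization; I would take $\alpha'$ to be $\alpha$ restricted to the old twinstacks and extend by the operation $a$ or $b$ dictated by the new bit. For the ToOutput step, the pile loses an element from a side of the top twinstack, and I would choose $\alpha'$ so that the popped element lies at the corresponding end of $\phi(\alpha',r_k)$, then apply $y$ or $z$; legality follows because the algorithm pops only elements equal to $O.last()+1$, and the correct orientation bit places that element at the proper end. For each Normalize subcase, the set of deques realizable from $r_{k+1}$ is a subset of those realizable from $r_k$: welding two twinstacks merges two orientation bits into one (take $\alpha'$ that gives both merged stacks the bit $\alpha$ assigns to the result), prepending a reversal of one of the welded twinstacks simply flips the bit on that twinstack in $\alpha'$, and the bottom-tuck case packages a particular one-sided bottom rearrangement that was already a realization of the pre-normalize pile under a definite choice of bottom orientation.

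The main obstacle will be the Normalize case, and in particular the bit-correspondence for the ``bottom tuck'' subcase newly introduced by the fix, together with the reverse-then-weld subcase that recursively reinvokes Normalize and can therefore compose several internal operations in one algorithmic transition. For these I will need to verify explicitly which $\alpha'$ reproduces $\phi(\alpha,r_{k+1})$ as $\phi(\alpha',r_k)$, and to check that the deque this common value encodes is monotonic whenever the tuck subcase fires, so that the extended deque run remains a valid walk in $\mathfrak{D}$.
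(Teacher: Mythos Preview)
Your proposal is correct and follows essentially the same approach as the paper: induction on $k$ with a case analysis on the type of algorithmic transition, handling the ``non-moving'' operations (reversals, welds, bottom tucks) via the repetition clause in the definition of realization, and the input/output operations by exhibiting the corresponding push/pop on the deque. The only notable difference is that for the ToOutput case you propose to directly construct the preimage orientation vector $\alpha'$, whereas the paper instead introduces the set $S_{\text{pop}}$ of pop-successors of realizations of $r_{k-1}$ and uses a cardinality argument (halving when the top twinstack is removed, bijection otherwise) to conclude that $S_{\text{pop}}$ exhausts the realizations of $r_k$; your direct approach is slightly cleaner, though you would still need to track the extra reversal that ToOutput may perform after popping.
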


\begin{proof}
We give a proof by induction on $k$.

base case ($k=0$): Clearly the one and only realization of the Rosenstiehl-Tarjan-Modified run
of trivial length starting with input $\pi$ is mapped
to the one and only valid deque run of trivial length starting
with that input $\pi$.

inductive case: Assume that the statement holds for $k-1$ steps.
Consider the $k$th step of run $r$. The Rosenstiehl-Tarjan-Modified
algorithm can bring about change in its state in a couple of ways.
\begin{enumerate}
\item A new twinstack could be added to the top of the pile by the FromInput
subroutine.
\item An element can be moved from the pile to the ouput (possibly with
the removal of the containing twinstack).
\item The top twinstack could be welded down.
\item A twinstack could be reversed.
\item The single largest element of the bottom twinstack, which is currently
residing as the only element in the stack in one side of that twinstack,
can be moved to the bottom of the other stack of that twinstack. (This
can occur in the ToOuput and Normalize subroutines.)
\end{enumerate}
The last two possibilities, reversing a twinstack and rearranging the
bottom twinstack, do not affect which states of the deque switchyard
can be realized. Therefore, the exact same run of the deque switchyard
which realizes $r_{0},\dots,r_{k-1}$ can realize $r_{0},\ldots,r_{k}$
by repeating the last state of the realization. 

In the third possibility, where the top twinstack is welded, the states
of the deque switchyard which are realizable from $r_{k}$ are a subset
of those realizable from $r_{k-1}$. Thus, for every realization of
$r_{k}$, there is already a run of the deque realizing $r_{0},\ldots,r_{k-1}$
which ends at that state. This can be extended to realize $r_{0},\ldots,r_{k}$
by repeating the last state of the realization.

In case 2, we are moving one element, $x$, from the pile to the output.
By the definition of realization, it is clear that every realization
of $r_{0},\dots,r_{k-1}$ ends with a deque switchyard state having
the element $x$ as one of the ends of the deque. 
Let $S_{\text{pop}}\subseteq\mathfrak{D}$ denote the set of states which
can be transitioned to from realizations of $r_{k-1}$ via a pop operation 
sending $x$ to the output.  If any of these is not a realization of $r_{k}$, 
then the state it is a transition from must not be a realization of $r_{k-1}$, a contradiction.
Thus $S_{\text{pop}}$ is a subset of the realizations of $r_{k}$.  Our goal is
to show that it is equal to the set of realizations of $r_{k}$, since this
would mean that there is a transition from a realization of $r_{k-1}$ to a 
realization of $r_{k}$ which can be appended to a specific realization of 
$r_{0},\dots,r_{k-1}$
(by the inductive hypothesis) to give the full desired realization of 
$r_{0},\dots,r_{k}$.

In the trivial case, where $x$ was the only element in the pile, there is
only one realization of $r_{k}$.  $S_{\text{pop}}$ must be nonempty, 
so this implies that $S_{\text{pop}}$ is equal the set of all realizations
of $r_{k}$.

Now consider the case where the pile contains at least one element besides
$x$.  We know from the inductive hypothesis that there are realizations of
$r_{0},\dots,r_{k-1}$ ending at every possible realization of $r_{k-1}$. 
Let $m_{k-1}$ denote the number of such possible realizations.

If the moving of the element $x$ to the output causes the top twinstack to be popped,
then the number of different realizations of $r_{k}$ is $\frac{m_{k-1}}{2}$.
Notice that no state in $\mathfrak{D}$ ever has more than two incoming 
edges corresponding to pop operations sending a fixed element $x$
to the output.  Therefore, the cardinality of $S_{\text{pop}}$ is greater than or equal to 
$\frac{m_{k-1}}{2}$.  So $S_{\text{pop}}$ must equal the set of all realizations
of $r_{k}$.

Alternatively, if the movement of $x$ to the output did not cause the top
twinstack to be popped, then the deque lists of every realization of 
$r_{k-1}$ must still be unique after the removal of $x$.  Therefore,
the cardinality of $S_{\text{pop}}$ must be $m_{k-1}$, implying 
that it is equal to the whole set of realizations of $r_{k}$.

Finally, in case 1 we are adding a new twinstack containing the element $x$
to the top of the pile. Consider an arbitrary $\alpha$ giving an
arbitrary realization of $r_{k}$. By the inductive hypothesis, there
is a realization of $r_{0},\ldots,r_{k-1}$ending in $\phi\left(\alpha,r_{k-1}\right)$.
But $\phi\left(\alpha,r_{k-1}\right)$ is a state of the deque which
can clearly transition to $\phi\left(\alpha,r_{k}\right)$ by taking
either an $a$ or $b$ operation. Thus there is a realization of $r_{0},\ldots,r_{k}$
ending at $d_{k}=\phi\left(\alpha,r_{k}\right)$.

Thus, if $r=r_{0},r_{2},\ldots,r_{k}$ is any run of the Rosenstiehl-Tarjan-Modified
algorithm starting with $\pi$ in the input and ending after $k$
steps at $r_{k}\in\mathfrak{R}$, for every realization of $r_{k}$,
there exists a valid run of the deque switchyard which realizes $r_{0},\ldots,r_{k}$
and ends at that particular realization of $r_{k}$. 
\end{proof}

\begin{lem}
After every run of the Normalize subroutine which returns true, every
realization of the state of the Rosenstiehl-Tarjan-Modified algorithm
is a non-sandwich state. Additionally, when the Normalize subroutine
returns false, this is because every realization of the state at the
start of that subroutine was a sandwich state.
\end{lem}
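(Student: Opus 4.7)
The plan is to prove both halves of the lemma by strong induction on the number of recursive invocations of Normalize in a single top-level call, with a five-way case analysis on the switch inside each layer. The inductive hypothesis brought in from the previous iteration of the main loop is that the pile of $r_{k-1}$ (what remains once the fresh twinstack $[x,-]$ placed by FromInput is removed) has only non-sandwich realizations.

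I would first establish two structural facts that pin down the shape of the pile. Fact 1: if any twinstack strictly above the bottom of the pile is double-sided, every realization of the pile is a sandwich state, because in any realization an element of a lower twinstack ends up in the middle of the deque flanked on both sides by strictly larger elements from the two sides of the offending twinstack (strictness comes from normality). Fact 2: a double-sided twinstack sitting alone at the bottom realizes to a bitonic, hence non-sandwich, deque, because each stack contributes a strictly increasing run to its side of the deque and the two stack bottoms meet in the middle. The inductive hypothesis together with Fact 1 forces the pile below $[x,-]$ at the start of Normalize to be entirely single-sided from the top down to a possibly double-sided bottom.

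For the first half I would walk through the four true-returning branches. In the two ``$x$ smaller than the relevant top(s)'' branches, prepending $x$ to every realization just extends an already-bitonic deque on its low end, so bitonicity is preserved. In the in-between branch, the welded twinstack $[L_2,[x,R_2]]$ is the bottom alone (Fact 1 forces the second twinstack to be the bottom here), and I would verify directly that both orientations are bitonic by observing that the condition $x<b$ slots $x$ into the monotone run on the $b$-side without creating a new local minimum. The fifth-``if'' branch tucks a new pile-maximum at the bottom of the single-sided bottom twinstack, leaving the pile monotonically realized. In the fifth-``else'' branch the weld produces $[\{x\},L_2]$ whose two realizations are exactly two of the four pre-weld realizations, and I invoke the inductive hypothesis on the shorter recursive call to finish.

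For the second half, the only path to a false return is the third branch, so I would show that whenever case three fires -- whether at the top of the call stack or after a sequence of fifth-``else'' welds -- every realization of the state $r_k$ at the start of the outer Normalize call is a sandwich state. At the moment case three fires, the current second twinstack $[L_2,R_2]$ is double-sided and by Fact 1 must be the current bottom; then, for each of the four orientations of top $\times$ second, the element immediately inside $x$ (top of $L_2$ or top of $R_2$, depending on orientations) has $x$ on one side and either the next element down in the same stack (larger by the stack invariant) or the opposite top of the second twinstack (larger by the bitonic structure of $r_{k-1}$) on the other side, giving the sandwich. The hardest step, I expect, will be lifting the sandwich witnesses back across any fifth-``else'' welds that occurred before case three fired: I would argue that each such weld discarded only those pre-weld realizations in which $x$ was already positioned to sandwich something against the larger element in the opposite-side stack it was welded onto, so nothing in the initial realization set escapes being a sandwich.
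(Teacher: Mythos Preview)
Your plan is more operational than the paper's, walking branch by branch through the switch, whereas the paper dispatches the first half in one stroke: whenever \texttt{Normalize} returns true the pile is normal, and in a normal pile no realization can sandwich any element~$i$, because a sandwich would require some larger~$j$ to sit outside~$i$ on the same side of the deque, hence either above~$i$ in the same side-stack or in a strictly higher twinstack---either way contradicting normality. Your case-by-case bitonicity check reaches the same endpoint with more work and is fine.

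The second half has a genuine gap. When case three fires you claim the element immediately inside~$x$---say the top of $L_2$---has a strictly larger inside neighbour, citing ``the next element down in the same stack'' or ``the opposite top of the second twinstack (larger by the bitonic structure of~$r_{k-1}$).'' Bitonicity alone does not give this. If the double-sided bottom twinstack has the form $[\{a\},R_2]$ with $a$ larger than every element of~$R_2$, then $a$'s inside neighbour is the bottom of $R_2$, which is smaller than~$a$, and in the orientation putting~$x$ adjacent to~$a$ the deque $x,\,a,\,R_2(\text{bot}),\ldots,R_2(\text{top})$ is strictly decreasing and hence sandwich-free. What rules this configuration out is a \emph{separate} invariant, maintained by \texttt{ToOutput}'s reorganization step and by the tuck branch of \texttt{Normalize}: the bottom twinstack is kept single-sided whenever it represents a monotonic deque. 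The paper invokes exactly this (``By making sure that the bottom twinstack is always one-sided if it is monotonic, we ensure that the presence of a double-sided twinstack ensures that any realization of that twinstack and all those below it must be non-monotonic''). Your inductive hypothesis that $r_{k-1}$ has only non-sandwich realizations does not imply this invariant; you need to carry it as an additional clause through your induction and verify that \texttt{ToOutput} (which you do not discuss) preserves it.
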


\begin{proof}
Suppose for the sake of contradiction that the Normalize subroutine
returns true, and that there is a realization which is a sandwich
state. The only way that there could be a realization as a sandwich
state is if some realization puts an element $i$ between two larger
elements, $j$ and $k$. Consider the empty deque list, to which we
begin adding elements from the stacks of our twinstacks in order to
form a realization. In forming a realization, each element must go
either to the left or right of this initial empty list. Clearly, $i$
cannot go on the opposite side from $j$ and $k$ while still appearing
between them in the realization. Thus at least one of $j,k$ must
go on the same side as $i$ and on the outside of $i$ with respect
to the position of the initial empty list.

Assume without loss of generality that the element $i$ is located
between the initial empty stack and the element $j$ in the realization
as a sandwich state. Then $j$ must have been located either above
$i$ in the same stack of the twinstack containing $i$, or $j$ must
be in a twinstack above the one containing $i$. But this is a contradiction,
since it is clear that the pile is normalized every time the Normalize
subroutine of Rosenstiehl-Tarjan-Modified returns true. Thus, Normalize
cannot return true unless every realization of the algorithm state
is a non-sandwich state.

Now we wish to show that, when the Normalize subroutine returns false,
every realization of the state at the start of that subroutine call
was a sandwich state. Suppose that Normalize returns false. Then,
at the time of the first execution of the return, the second twinstack
is double-sided, and the top elements of both of its sides are smaller
than an element $x$ in the top twinstack. The previous calls of Normalize,
if any, only modified the state by welding together elements of the
top two stacks. Thus, when Normalize was first called, this double-sided
twinstack was already double-sided, and the element $x$ was already
in a twinstack above the double-sided twinstack. Call this state $r\in\mathfrak{R}$.

By making sure that the bottom twinstack is always one-sided if it
is monotonic, we ensure that the presence of a double-sided twinstack
ensure that any realization of that twinstack and all those below
it must be non-monotonic. Thus, any realization of the state $r$
must have the top elements of the double-sided twinstack separated
by an element larger than both of them. So, wherever the realization
places $x$, it will sandwich one of these top elements of the double-sided
twinstack. Therefore, every realization of $r$ is a sandwich state.
\end{proof}

\begin{thm}
The Rosenstiehl-Tarjan-Modified algorithm is correct. That is, it
returns true for a permutation $\pi$ if and only if there is a valid
run of a deque switchyard which sorts the permutation $\pi$.
\end{thm}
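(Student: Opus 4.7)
The plan is to prove each direction of the biconditional separately. For the soundness direction (RTM returns true implies $\pi$ is sortable), I would invoke the realization lemma on the full successful run $r_0,\ldots,r_n$: when RTM returns true, $r_n$ has empty pile, empty input, and output $1,2,\ldots,n$, so the only realization of $r_n$ is the unique deque state with empty deque and fully sorted output. The realization lemma then guarantees a valid deque run realizing $r_0,\ldots,r_n$ and ending at this sorted state, which is precisely a successful deque sort of $\pi$.

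For the completeness direction (sortable implies RTM returns true) I would argue by contradiction. Given a sortable $\pi$, the reduced-run lemma supplies a reduced successful deque run $d_0,\ldots,d_{2n}$. I single out its checkpoints $d^{(0)},d^{(1)},\ldots,d^{(n)}$, where $d^{(j)}$ is the deque state after the $j$-th push together with all subsequent reduced pops, and pair them with the RTM states $r_0,r_1,\ldots,r_n$ obtained at the end of each main-loop iteration. The key claim, proved by induction on $j$, is that $d^{(j)}$ is a realization of $r_j$ for as long as RTM has not yet aborted. In the inductive step, after FromInput places a new singleton twinstack on top of the pile, the realizations of the augmented pile include $d^{(j-1)}$ with the next input element pushed onto either end of its deque, in particular the end used by the reduced run. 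If Normalize returned false at this point, the Normalize-correctness lemma would force every realization, including this one, to be a sandwich state, contradicting the fact that the successful reduced run proceeds through it. Hence Normalize returns true, and one then verifies that ToOutput pops exactly the same elements the reduced run pops, since the popped element is always the next required output and necessarily sits at an end of every realization. This establishes $d^{(j)}$ as a realization of $r_j$. Taking $j=n$ gives the empty-deque sorted state as a realization of $r_n$, which forces $r_n$'s pile to be empty, so RTM returns true.

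The main technical obstacle is verifying that the specific realization corresponding to the reduced run is preserved through the internal manipulations of Normalize, especially the welds. Reversing a single twinstack and the bottom-twinstack rearrangements performed after a max-insertion or after a pop that leaves the bottom twinstack monotonic do not change the set of realizations, but a weld strictly halves that set by fixing the relative orientation of two adjacent twinstacks. The crucial observation is that Normalize always reverses before welding in exactly the way that retains all non-sandwich realizations, so the reduced-run realization, which is non-sandwich since a sandwich state admits no successful completion, survives every weld. Making this commuting-square style interaction between pile-level operations and deque-level transitions fully rigorous is the heart of the inductive step.
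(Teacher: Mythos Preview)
Your proposal is correct and follows essentially the same approach as the paper. Both directions use the same three lemmas in the same roles: the realization lemma for soundness, and the reduced-run lemma together with the Normalize-correctness lemma for completeness, with the core invariant being that the reduced run's current state is always a realization of the current RTM state. The only organizational difference is granularity: the paper isolates a transition-level subclaim (handling each of the five atomic RTM operations---push, pop, weld, reverse, bottom-rearrange---separately), whereas you induct at the level of main-loop iterations via your checkpoints $d^{(j)}$ and $r_j$. Your identification of the weld case as the crux, and the reason it preserves non-sandwich realizations, matches the paper's treatment of its case~3 exactly.
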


\begin{proof}
($\Longleftarrow$): Suppose that the Rosenstiehl-Tarjan-Modified
algorithm returns true. Then, at the time of return, its input list
must be empty. 

We claim furthermore, that at the time of return, every element of
the permutation is in the output list in sorted order. Clearly every
element that is in the output list must be in sorted order.  
Suppose, for the sake of contradiction, there are elements remaining
in the pile at the time of return.  Since the pile was normalized 
prior to the final call to ToOuput, the smallest element not on
the output must have been one of the top elements of the top
twinstack.  But this would imply that it would have been moved 
by ToOuput.  Thus we have a contradiction, and the pile must
be empty at the time when the algorithm terminates with the return
value true. Therefore, at the time of return, every element of the
permutation is in the output list in sorted order.

At the time of return, the states taken by the algorithm represent
a run of Rosenstiehl-Tarjan-Modified staring with the state where
$\pi$ is in the input, and ending with the state where $1\ldots n$
is in the output. By the previous lemma, every such run is realizable
as a run of a deque switchyard, with realizations for each possible
realization of the final state of the Rosenstiehl-Tarjan-Modified
run. Any such realization in this case represents a successful sorting
of $\pi$. Therefore $\pi$ is deque sortable.

($\Longrightarrow$): Now suppose that $\pi$ is deque sortable
permutation.  The one remaining result we need to show that 
Rosenstiehl-Tarjan-Modified will return true is the following subclaim.

\begin{sclm}
Suppose a run of Rosenstiehl-Tarjan-Modified transitions from a state
$r\in\mathfrak{R}$ to $r'\in\mathfrak{R}$. Let $d$ be any realization
of $r$ which is not a sandwich state, and let $d'$ be any non-sandwich
state reachable by taking a valid step from $d$ as part of a successful
reduced run of the deque switchyard. Then, either $d$ is a realization
of $r'$, or $d'$ is a realization of $r'$.
\end{sclm}

\begin{proof}
Once again, we consider all possible state transitions of the Rosenstiehl-Tarjan-Modified
algorithm.
\begin{enumerate}
\item A new twinstack could be added to the top of the pile by the FromInput
subroutine.
\item An element can be moved from the pile to the output (possibly with
the removal of the containing twinstack).
\item The top twinstack could be welded down.
\item A twinstack could be reversed.
\item The single largest element of the bottom twinstack, which is currently
residing as the only element in the stack in one side of that twinstack,
can be moved to the bottom of the other stack of that twinstack. (This
can occur in the ToOuput and Normalize subroutines.)
\end{enumerate}
In the last two cases, and realization before the transition is still
realizable after the transition, so the statement holds.

In case 3, the only realizations being eliminated by the weld are
those where some large element $j$ would have gone on the outside
of a smaller top element $i$. Such realizations are always sandwich
states unless $i$ is the only element in all twinstacks below the
one containing $j$. This situation would be handled by the 5th case
instead, so we can safely conclude that any realizations eliminated
by the weld are sandwich states.

Now consider case 2. In this case we are popping an element which
can be placed on the output. According to the definition of a reduced
run, the only step we could possibly be taking from any realization
$d$ of $r$ would be to pop this element. Therefore, every $d'$
which is reachable by a valid step of a reduced run of the deque switchyard
is a realization of $r'$. 

Finally, consider case 1. Here, we are transitioning from $r$ to
$r'$ by pushing a new element from the input onto the pile as its
own new twinstack. Since the FromInput call involved in this transition
from $r$ to $r'$ was immediately preceded by a
call to ToOuput, the state $r$ cannot have any elements which are
available to be popped. Therefore, the only possible transitions from
$d$ to $d'$ as part of a successful reduced run on the deque switchyard
are caused by the operations $a$ and $b$, and the two resulting
states are both realizations of $r'$.

Therefore, in every case, if $d$ is any realization of $r$ which
is not a sandwich state, then either $d$ is a realization of $r'$,
or every $d'$ which is reachable by a valid step from $d$ as part
of a successful reduced run of the deque switchyard is realizable
from $r'$. 
\end{proof}

Since $\pi$ is deque sortable, there must be a successful run of
the deque switchyard starting with $\pi$ in the input and ending
with the identity permutation in the output. By our lemma, there must
also be a successful reduced run. Denote this successful reduced run
by $d=d_{0},\ldots,d_{2n}$.

Because the initial state $d_{0}$ is a realization of the initial
state of the run of Rosenstiehl-Tarjan-Modified on the permutation
$\pi$, the above subclaim shows that, as we run the Rosenstiehl-Tarjan-Modified
algorithm, every state $d_{0}$ through $d_{k}$ will be included
as realizations of this run, where $d_{k}$ is the state which is
a realization of the current state of the Rosenstiehl-Tarjan-Modified
at the time of termination. 

Suppose for the sake of contradiction that the Rosenstiehl-Tarjan-Modified
were to return false. Then our lemma shows that every realization
of the current state of the Rosenstiehl-Tarjan-Modified algorithm
must be a sandwich state. But $d_{k}$ is a realization which is not
a sandwich state, which gives a contradiction. Therefore the Rosenstiehl-Tarjan-Modified
algorithm must return true. 

So the Rosenstiehl-Tarjan-Modified algorithm returns true for a permutation
$\pi$ if and only if there is a valid run of a deque switchyard which
sorts the permutation $\pi$. 
\end{proof}

\section{Applying Rosenstiehl-Tarjan-Modified to the Enumeration Problem}

The naive approach to calculating the number of sortable permutations
of a given length $n$ would be to enumerate all permutations of length
$n$ and then test each one for sortability using the appropriate
version of the Rosenstiehl-Tarjan algorithm. This approach has the
ghastly runtime of $\Theta\left(n\cdot n!\right)$.

A much better approach is to search the permutation tree, pruning
subtrees of permutations which are not sortable. Consider the tree
where each node at depth $k$ is a permutation of length $k$, and
its children are the permutations of length $\left(k+1\right)$ formed
by inserting the element $\left(k+1\right)$ at the $\left(k+1\right)$
possible insertion locations. 

\begin{figure}[ht]
\center{\includegraphics[scale=0.6]{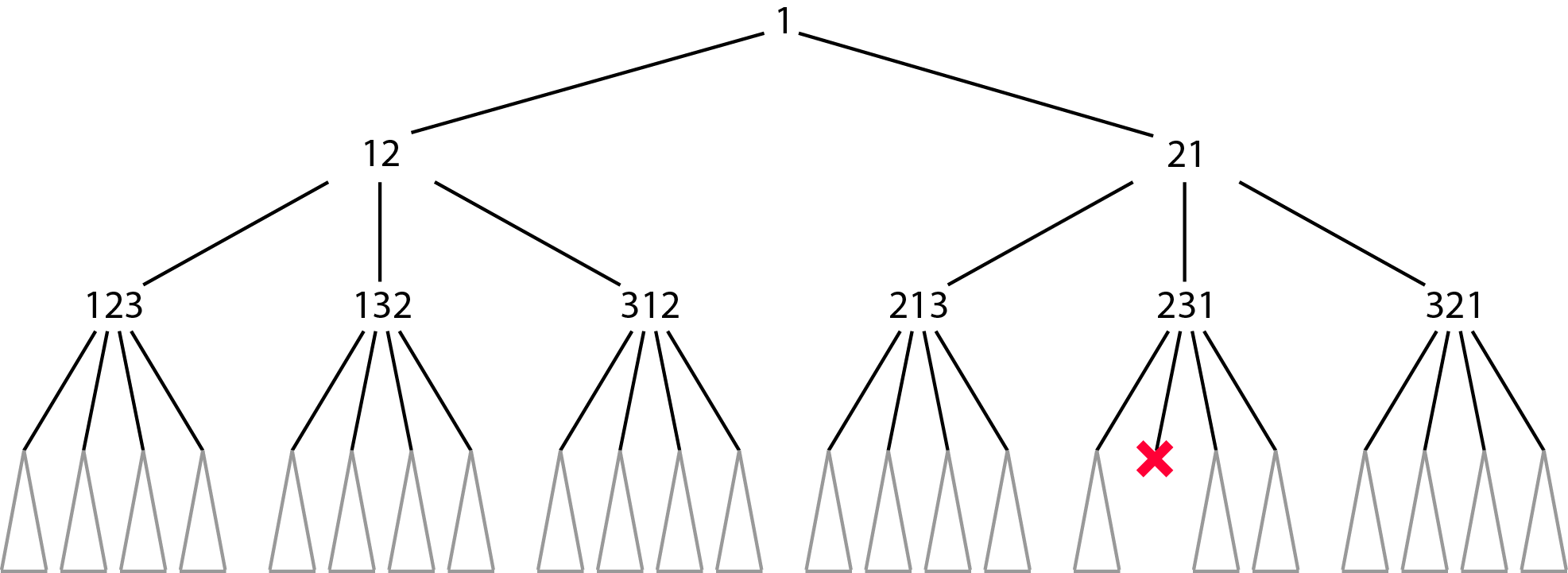}}
\caption{Searching the permutation tree}
\end{figure}

Clearly, this tree is the poset of all permutations (oriented
upside down, and with only some of the connections shown). Since permutations classes are closed under downward
containment in the poset of permutations, they are closed under upward
traversal of this tree. Therefore, any node of the tree whose permutation
does not belong to a given permutation class is the root of a subtree
which does not contain any members of that class. We use this property
to prune a depth first search of the permutation tree for sortable
permutations of length $n$.

The algorithm for calculating $\left|\mathcal{C}_{n}\right|$ or $\left|\mathcal{D}_{n}\right|$
is thus given as follows:
\begin{itemize}
\item Start at the root of the permutation tree and traverse it via depth
first search.
\item At each node, test the permutation for sortability using the appropriate
version of the Rosenstiehl-Tarjan-Modified algorithm. If the permutation
is not sortable, backtrack.
\item Whenever a permutation of length $n$ is found to be sortable, increment
the number of sortable permutations, then backtrack.
\end{itemize}
The runtime of this algorithm clearly depends on the number of nodes
visited. Since every node visited is the child of some node at the
previous depth whose permutation is sortable, and each node at depth
$\left(k-1\right)$ has $k$ children, the number of nodes visited is given by the
following formula (for the parallel stack sortability case).\begin{align*}
\text{\# nodes visited} & =\sum_{i=1}^{n}i\left|\mathcal{C}_{i-1}\right| &  & \text{where we define }\left|\mathcal{C}_{0}\right|=1\\
 & \leq n\sum_{i=1}^{n}\left(\text{gr}\left(\mathcal{C}\right)\right)^{i-1}\\
 & =n\sum_{i=0}^{n-1}\left(\text{gr}\left(\mathcal{C}\right)\right)^{i}\\
 & =n\frac{\left(\text{gr}\left(\mathcal{C}\right)\right)^{n}-1}{\text{gr}\left(\mathcal{C}\right)-1}\\
 & =O\left(n\cdot\left(\text{gr}\left(\mathcal{C}\right)\right)^{n}\right)\end{align*}

Combining this calculation with the linear runtime at each visited
node, we see that the runtime of this enumeration algorithm is $O\left(n^{2}\cdot\left(\text{gr}\left(\mathcal{C}\right)\right)^{n}\right)$.
(Similarly, for the deque sortability case we derive a runtime of
$O\left(n^{2}\cdot\left(\text{gr}\left(\mathcal{D}\right)\right)^{n}\right)$.) 

Modulo the sub-exponential factor, this is optimal among algorithms
which must consider every sortable permutation. This is asymptotically
superior to the runtime of Zimmermann's C program (which can be though
of as having a runtime of $O\left(\left(\text{gr}\left(\mathcal{D}\right)+\Delta\right)^{n}\right)$
for some positive constant such that 
$\text{gr}\left(\mathcal{D}\right)<\left(\text{gr}\left(\mathcal{D}\right)+\Delta\right)<16$),
though the runtime difference is not sufficient to change the range
of values of $n$ for which the calculation can be reasonable performed.
We wrote an efficient C implementation of this algorithm which calculated 
the first $14$ terms of the
sequences $\left|\mathcal{C}_{1}\right|,\left|\mathcal{C}_{2}\right|,\ldots$
and $\left|\mathcal{D}_{1}\right|,\left|\mathcal{D}_{2}\right|,\ldots$ (the
same terms that Zimmermann calculated with his algorithm).

We now transition to the construction of a new algorithm whose operation
does not depend on examination of each sortable permutation.

\section{The Relativistic Algorithm for Counting Parallel Stack Sortable Permutations}
As with the Rosenstiehl-Tarjan-Modified algorithm itself, we will
develop the new algorithm by first considering the simpler case of
computing $\left|\mathcal{C}_{n}\right|$, and then progressing to
a version of the algorithm which can calculate $\left|\mathcal{D}_{n}\right|$. 

Let us define a run of the Rosenstiehl-Tarjan-Modified algorithm as
before, as a walk on the state graph whose vertices are the elements
of $\mathfrak{R}$. Let us call a successful run of the algorithm
an \underbar{R-T history}. The key idea behind the new algorithm is
that, instead of counting the number of parallel stack sortable permutations
directly, we can count histories of the Rosenstiehl-Tarjan-Modified
algorithm. We demonstrate the equivalence of these two approaches
with the following lemma.

\begin{lem}
The set of histories of the Rosenstiehl-Tarjan-Modified algorithm
for parallel stacks (respectively deques) is in bijection with the
set of parallel stack sortable permutations (respectively deque sortable
permutations).
\end{lem}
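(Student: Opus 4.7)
The plan is to exhibit a direct bijection based on two simple observations: the Rosenstiehl-Tarjan-Modified algorithm is deterministic, and the correctness theorem just established tells us exactly when a run terminates successfully. Define the forward map $\Phi$ from sortable permutations to histories by sending $\pi$ to the unique run of Rosenstiehl-Tarjan-Modified on input $\pi$, and define the inverse candidate $\Psi$ from histories to permutations by reading off the initial input list of the history.

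First I would verify that $\Phi$ is well-defined and lands in the set of histories. Because the algorithm performs no nondeterministic choices (each of FromInput, Normalize, and ToOutput deterministically selects its next action from the current state), the starting state $r_0$ determined by $\pi$ uniquely determines the entire run. By the correctness theorem just proven, if $\pi$ is parallel-stack-sortable (respectively deque-sortable), this run terminates with return value true; by definition, such a successful run is exactly an R-T history, so $\Phi(\pi)$ is a history. Conversely, $\Psi$ is clearly well-defined: every history, being a run, begins at some unique initial state whose input list contains a permutation $\pi$, and again by the correctness theorem the existence of a successful run forces $\pi$ to be sortable.

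Finally I would check that $\Phi$ and $\Psi$ are mutually inverse, which is essentially immediate. For any sortable $\pi$, the history $\Phi(\pi)$ starts with $\pi$ in its input list, so $\Psi(\Phi(\pi))=\pi$. For any history $h$, the run $\Phi(\Psi(h))$ is the unique deterministic run starting from the initial state of $h$; since $h$ itself is such a run and the algorithm is deterministic, we must have $\Phi(\Psi(h))=h$. The same argument applies verbatim in both the parallel-stack and deque settings, since the only facts used are the determinism of the algorithm and the biconditional correctness statement, both of which hold in each case. There is no real obstacle here beyond being careful to invoke the correctness theorem for both the ``only sortable permutations produce histories'' direction and the ``every sortable permutation produces a history'' direction.
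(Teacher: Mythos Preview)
Your proposal is correct and follows essentially the same approach as the paper: both arguments hinge on the determinism of the algorithm (so distinct histories have distinct starting permutations) and the correctness theorem (so sortable permutations yield histories and vice versa). The only cosmetic difference is that the paper phrases it as a two-sided cardinality comparison, whereas you explicitly name the maps $\Phi$ and $\Psi$ and verify they are mutual inverses; the underlying reasoning is identical.
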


\begin{proof}
($\supseteq$): Different sortable input permutations always produce
distinct runs, and since Rosenstiehl-Tarjan-Modified is correct, they
produce distinct histories. Therefore, there are at least as many
R-T histories as there are sortable permutations.

($\subseteq$): Consider any set of distinct R-T histories. Since
the R-T algorithm is deterministic, they must differ in their first
state, and thus in there input permutation. By the correctness of
Rosenstiehl-Tarjan-Modified, each of these permutations must in fact
be sortable. Therefore, there are at least as many sortable permutations
as there are R-T histories. 
\end{proof}

Counting R-T histories is still not an easy task. The state space
$\mathfrak{R}$ is still very large and complex. We (Doyle) noticed,
however, that there is a great deal of symmetry between states whose
pile of twinstacks have the same relative orders of elements. Thus,
we are going to consider an new state space designed to better take
advantage of these symmetries.

\begin{quotation}
$\overset{def}{=}$ Let the \underbar{relativistic twinstack} (or
an r-twinstack) corresponding to a given twinstack be the structure
obtained by forgetting all of the labelings of the elements and remembering
only their relative orders. (So an r-twinstack containing $k$ elements
can be represented by a binary string of length $k$, where the $i$th
smallest element is in the left stack if and only if the $i$th number
in the string is a $1$.) 
\end{quotation}

\begin{figure}[ht]
\center{\includegraphics[scale=0.8]{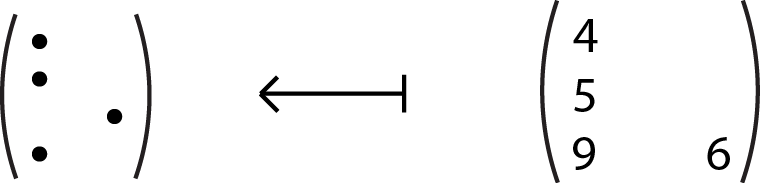}}
\caption{An example r-twinstack and one classic twinstack mapping to it}
\end{figure}

Short aside: Rosenstiehl and Tarjan require their twinstacks to be
nonempty, but we will see that it is convenient for us to also consider
``the empty r-twinstack''.

Notice that almost all r-twinstacks correspond to several different
twinstacks. Additionally, the possibilities for an r-twinstack with
$k$ elements involved in the sorting of a permutation of length $n$
are the same as the possibilities for an r-twinstack involved in the
sorting of a permutation of length $m$ as long as $k<n,m$. Thus,
by rephrasing our state space in terms of r-twinstacks, we make it
much easier to phrase the counting of histories in terms of subproblems.
This motivates the following definition.

\begin{quotation}
$\overset{def}{=}$ Let a \underbar{r-state} consist of a pile of
r-twinstacks, along with a count of the number of elements in the
input and the number of elements in the output.
\end{quotation}

There is a natural surjective mapping from states of the Rosenstiehl-Tarjan-Modified
algorithm to r-states. Let the set of all r-states be denoted $\mathfrak{E}$.
Just as we did for the states in $\mathfrak{R}$, we will consider
the simple directed graph formed by valid state transitions of the
Rosenstiehl-Tarjan-Modified algorithm, except that we consider a transition
from one r-state to another to be valid if and only if there is a
pair of states in $\mathfrak{R}$ which map to the r-states and between
which there is a valid Rosenstiehl-Tarjan-Modified state transition.
Notice that just as the correspondence between r-states in $\mathfrak{E}$
and states in $\mathfrak{R}$ can be one-to-many, the correspondance
between edges in the transition graph on vertex set $\mathfrak{E}$
and the transition graph on vertex set $\mathfrak{R}$ can also be
one-to-many. 

Furthermore, while the transition graph on $\mathfrak{R}$ had at
most one out-edge from any vertex, a single vertex in the transition
graph on $\mathfrak{E}$ can have many out-edges since there can be
many possible transitions from a given r-state depending on the relative
order of the new element taken from the input.

Example:

\begin{figure}[ht]
\center{\includegraphics[scale=1.0]{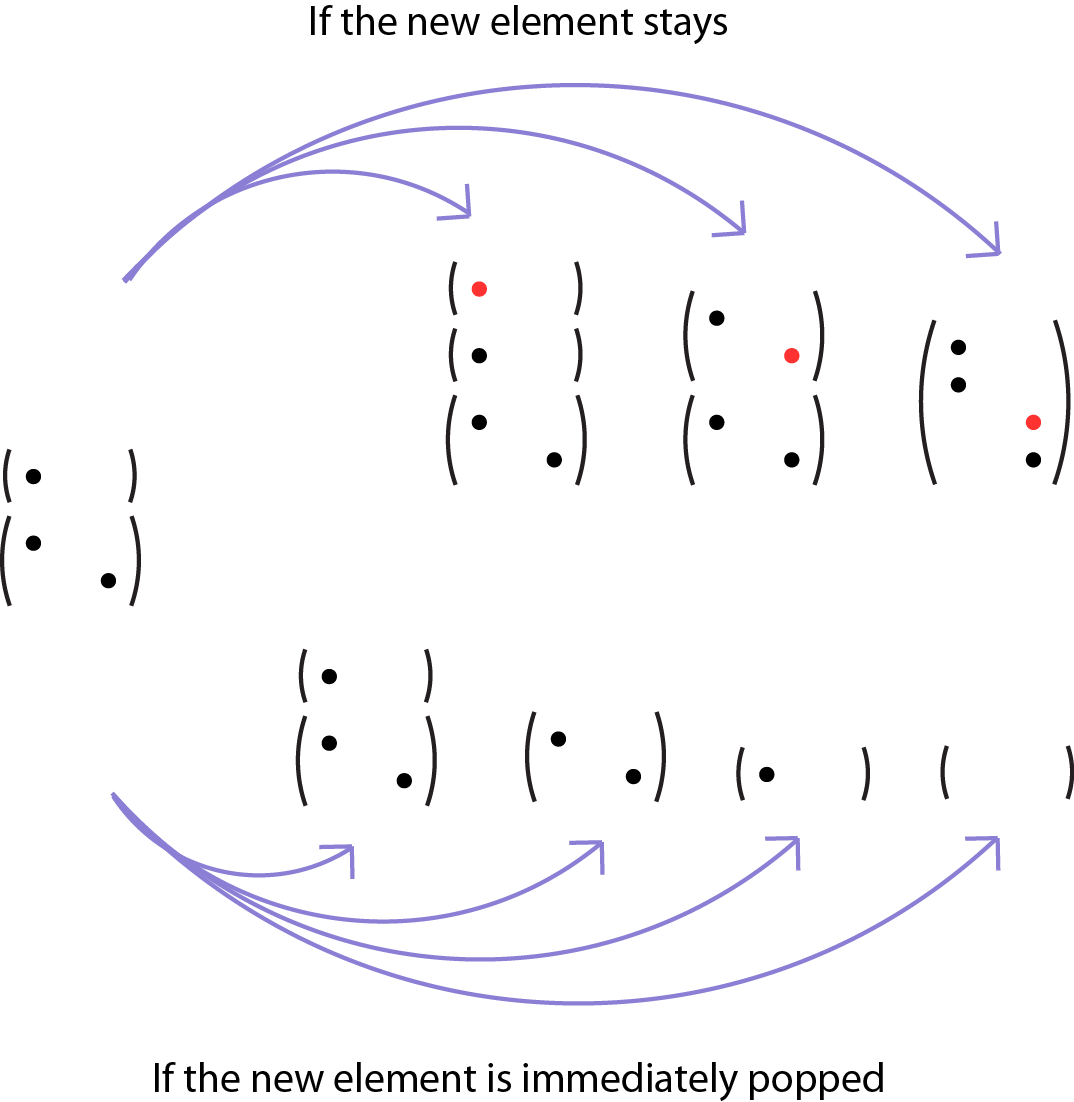}}
\caption{Example of possible transitions from a given r-state}
\end{figure}

We next go on to give the analog of our definition of an R-T history.

\begin{quotation}
$\overset{def}{=}$ An \underbar{r-history} is a length $n$ path
in the r-state graph on vertex set $\mathfrak{E}$ which has a lifting
to the state graph on vertex set $\mathfrak{R}$ as an R-T history
(or as a successful run of the Rosenstiehl-Tarjan-Modified algorithm).
\end{quotation}

The next result is slightly more surprising than the bijection between
R-T histories and sortable permutations.

\begin{lem}
The set of r-histories on the transition graph of r-states with $n$
elements is in bijection with the set of sortable permutations of
length $n$.
\end{lem}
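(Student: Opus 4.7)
The plan is to combine the preceding lemma (which gives a bijection between R-T histories and sortable permutations) with a new bijection between R-T histories and r-histories induced by the natural label-forgetting projection $\mathfrak{R}\to\mathfrak{E}$ applied pointwise along a run. That this projection lands in the set of r-histories, and does so surjectively, is immediate from the defining lifting property of an r-history. The substance of the lemma therefore lies in proving that the projection is injective: no two distinct R-T histories descend to the same r-history.

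For injectivity I would argue by tracing divergences. Since Rosenstiehl-Tarjan-Modified is deterministic, distinct R-T histories correspond to distinct input permutations $\pi\neq\pi'$; let $j$ be the smallest index at which $\pi_{j}\neq\pi'_{j}$. Through macro-step $j-1$ the two runs agree as labeled objects, so just before step $j$ both piles hold identical labels. At macro-step $j$ one run pushes $\pi_{j}$ and the other pushes $\pi'_{j}$ onto this common pile. In the first case, $\pi_{j}$ and $\pi'_{j}$ have different ranks relative to the pile; since Normalize branches only on the rank of the newcomer, the two runs reach different pile r-configurations, so the r-states at step $j$ already diverge. In the second case the two elements share a rank, so the r-configurations after Normalize coincide but the pile labels differ in the slot freshly occupied by the new element.

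To finish the second case, I would trace both runs forward past step $j$. Because each is successful, the pile must eventually empty, so the differing label must be inspected by ToOutput at some macro-step $k\geq j$. ToOutput's branching predicate compares an actual label at the top of a twinstack to the next required output value, and two distinct integer labels cannot simultaneously yield the same boolean answer to that comparison; hence at some point one run performs a pop that the other does not, the $|\mathrm{output}|$ counts diverge, and the r-states differ at macro-step $k$. The main obstacle is making this second case fully rigorous—principally the assertion that the differing label really does reach a position inspected by ToOutput before termination, which follows from the fact that a successful run empties the pile so every element is eventually the subject of such a comparison. With injectivity established, composing with the bijection from the preceding lemma yields the claimed bijection between r-histories and sortable permutations of length $n$.
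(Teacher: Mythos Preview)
Your overall strategy—compose Lemma 7.1 with a proof that the projection from R-T histories to r-histories is bijective—is sound, and surjectivity really is immediate from the definition. But the Case-2 injectivity argument contains a concrete error. The sentence ``two distinct integer labels cannot simultaneously yield the same boolean answer to that comparison'' is simply false: the comparison in ToOutput is ``does this label equal $o+1$?'', and two distinct labels can both answer ``no''. So the mere fact that the differing slot is eventually \emph{inspected} does not force a divergence. The obstacle you single out (that the slot eventually reaches the top) is not the operative one.

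The argument is repairable. Under the assumption that the two r-histories agree, the schedule of pops is identical in both runs, and at the moment any particular slot is popped its label must equal the common next-required value $o+1$. The slot carrying $\pi_j$ in one run and $\pi'_j$ in the other is eventually popped (since the pile empties in a successful run), and at that instant we would need $\pi_j=o+1=\pi'_j$, contradicting $\pi_j\neq\pi'_j$. So you should argue about the step at which the slot is \emph{popped}, not merely inspected. You should also acknowledge that from step $j+1$ onward further slots may acquire differing labels (the two inputs disagree beyond position $j$); this does not break the repaired argument, but your write-up silently tracks only one slot.

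The paper takes a different and shorter route. Instead of a forward injectivity argument, it constructs an explicit inverse: label the terminal r-state's output as $1,\ldots,n$ and read the r-history backward, letting each r-transition carry the labels from the output back through the pile and into the input. Since every r-transition is a bijection on slots (plus at most one slot entering from input or leaving to output), the labels propagate back uniquely, yielding a single permutation $\pi$ for each r-history. This exhibits the inverse to the projection directly, with no case analysis. Both approaches ultimately work, but the backward-labeling argument is what the paper does and it sidesteps the pitfalls above.
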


\begin{proof}
Suppose we are given an r-history, $\alpha$. Notice that the final
r-state of $\alpha$ is the unique r-state with $n$ elements in the
output. Call this state $x$. Since every r-history lifts to an R-T
history, and every R-T history ends with the identity permutation
in the output, we can label each of the elements in the output of
the r-state $x$ so that they form the identity. Then, by following
$\alpha$ in reverse, it is possible to track the labels on the elements
as they propagate back from the output, into the pile of r-twinstacks,
and into the input. 

Thus, by running $\alpha$ in reverse, we can determine the unique
input permutation which could have lead to the that r-history. Since
the inverse of this map is the map taking a sortable permutation to
the R-T history that it generates and then mapping that R-T history
to an r-history in the natural way, this map is clearly a bijection
between r-histories and sortable permutations. 
\end{proof}

Lemma 7.2 implies that, instead of trying to count either the number
of sortable permutations or the number of Rosenstiehl-Tarjan-Modified
histories, we can simply count the number of r-histories. This will
prove much easier, since the set r-states in $\mathfrak{E}$ for permutations
of size $m<n$ are a subset of the set of states for size $n$, modulo
differences in count of the number of elements in input and output.
This is the gist of the subproblem decomposition which will be used
for the relativistic algorithm. In order to formalize this decomposition,
however, we would like to introduce a few more concepts.

We propose to let an epoch be a section of a r-history which is tied
to a certain level of the stack of r-twinstacks. The base epoch, associated
with the lowest r-twinstack, will be the r-history itself. We might
then associate one or more epochs with the second r-twinstack, and
still more with the r-twinstack above that, etc.

An epoch $E$ at a higher level can be viewed as a subpath of the
total r-history. However, we say that $E$ has its own perspective
from which it sees itself as the base epoch at the level of the bottom
twinstack. Thus $E$ views itself as the r-history created by taking
the subpath of the original r-history and removing from each of the
states in this sequence all of the r-twinstacks below $E$'s level. 

Let us now give a formal definition of an epoch. 

\begin{quotation}
$\overset{def}{=}$ Given an r-history (which is a path in the transition
graph on the set of r-states $\mathfrak{E}$), an \underbar{epoch
at level $h$} (for $h>0$) is defined as a {}``subpath'' $x'\rightarrow\ldots\rightarrow y'$
s.t.:
\begin{itemize}
\item The epoch begins at some r-state $x$ where the r-twinstack at level
$\left(h-1\right)$ is nonempty and the r-twinstack at level $h$
is empty.
\item The epoch ends at the r-state $y$ which is the first r-state following
$x$ in the path such that the r-twinstack at level $\left(h-1\right)$
is modified in the transition to r-state $y$.
\end{itemize}
And where the r-states in the {}``subpath'' $x'\rightarrow\ldots\rightarrow y'$
are created from the actual subpath $x\rightarrow\ldots\rightarrow y$
by removing all of the r-twinstacks for levels $0$ through $\left(h-1\right)$
and by decrementing the input and output element counts such that
the output count is zero at $x'$ and the input count is zero at $y'$.
(When $h=0$, the starting and ending conditions are waved, and we
say that there is a single epoch corresponding exactly to the r-history.)
\end{quotation}

Each epoch is itself an r-history for the sorting of some smaller
permutation. (Notice that the pile of r-twinstack is always empty
at the end of the epoch because, if the epoch is not the original
base epoch then it must end when the r-twinstack just below its level
is modified by either a pop or weld operation, and in either case
the r-twinstack at its level must be empty.) Furthermore, since no
epoch can end before any epochs above it end (by the requirment of
having empty r-twinstacks at the end of an epoch), the epochs are
properly nested. Every epoch at level $\left(h-1\right)$ can contain
zero or more epochs at level $h$.

To make the epoch decomposition of a given epoch/r-history well defined,
we say that an epoch begins at level $h$ \emph{any time} that the
current r-state has a nonempty r-twinstack at level $\left(h-1\right)$
and there is not already an existing epoch at level $h$. Thus every
epoch which includes an r-state with a non-empty r-twinstack must
contain \emph{one} or more child epochs at the next higher level,
while any epoch which includes only r-states with empty r-twinstacks
has no child epochs.

\begin{figure}[ht]
\center{\includegraphics[scale=1.0]{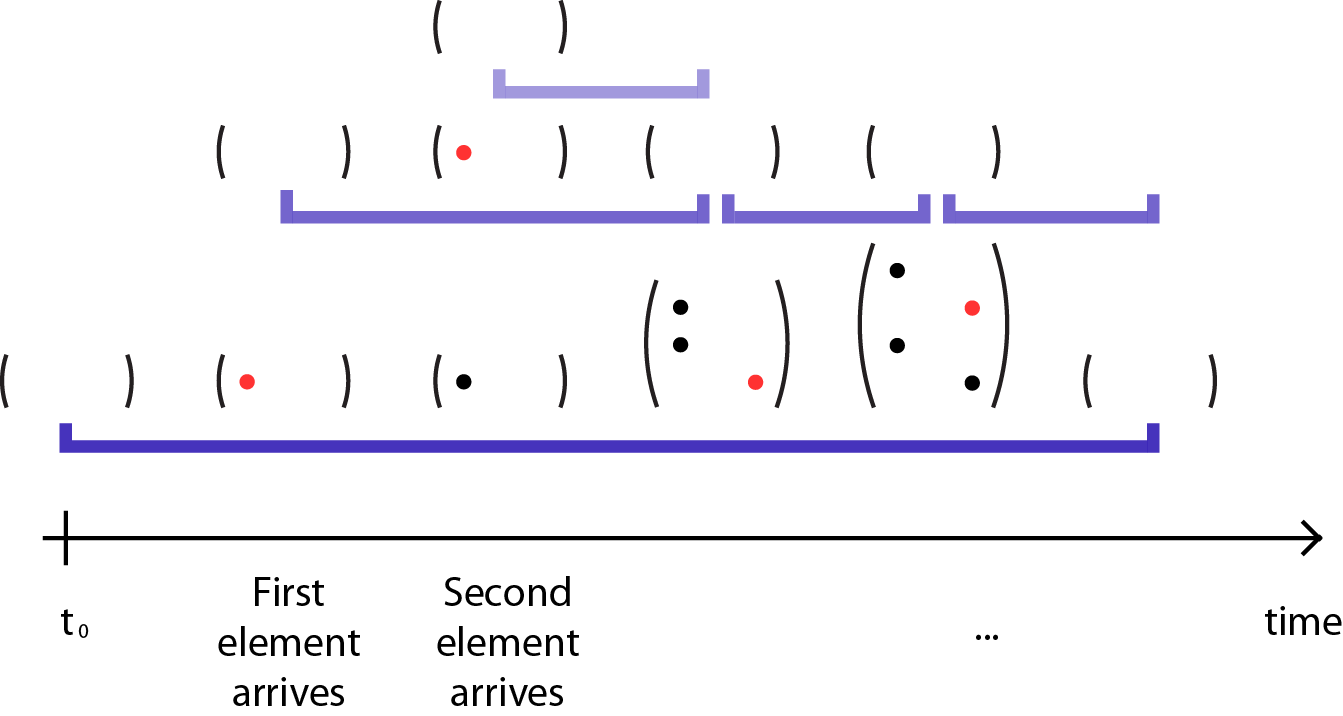}}
\caption{Example decomposition of an r-history into epochs}
\end{figure}

It should be clear that this nested tree of epochs, each corresponding
to a smaller r-history, gives a subproblem decomposition which we
can use to address the task of calculating the number of r-histories
of a given length. Before we finally address this directly, however,
allow us to introduce one more key definition relating to the ways
in which an epoch can end.

An epoch always ends either by popping all of its elements and then
popping an element of the r-twinstack below that epoch, or by welding
all of its elements onto the r-twinstack below that epoch. (The exception
is the original bottom epoch, or root epoch, which ends where it pops
all of its elements and there are no elements remaining in the input.)
Thus, the ways in which the r-twinstack belonging to a given epoch
can be modified are tied to the ways in which the epoch at the next
level above can end. 

Recall that every epoch/r-history can be lifted to some successful
R-T history. Since, whenever we weld in an R-T history there can only
be one element $i$ which is too big and is thus preventing the pile
from being normal, this is also the case in epochs/r-histories. Thus,
whenever we weld $k$ elements onto some r-twinstack $t$, we know
exactly what form they will have. There will be one large element
$i$ which is larger than some element in $t$, and on the opposite
side will be welded $k-1$ smaller elements which are smaller than
every element in $t$. Thus the integer $k\geq1$ completely describes
the possible transitions that can be caused by the weld.

\begin{quotation}
$\overset{def}{=}$ We say that at the end of every epoch $E$, $E$
sends a \underbar{signal} $k$ to the epoch below it, with $k=0$
if $E$ ends by popping, or $k$ equals the number of elements being
welded down if $E$ ends by welding.
\end{quotation}

The information sent as a signal by an epoch $E$ at its termination
is exactly what is needed to determine the ways in which the r-state
can change at the epoch below $E$ when epoch $E$ ends. We can also
think of an epoch as sending a signal even at some times when it is
not terminating, whenever it presents the opportunity for the epoch
below to modify its r-twinstack. Namely, we can view an epoch $E$
as sending $k=0$ whenever it pops every element from its r-twinstack,
and as sending the signal $k>0$ whenever it places a new element $i$
at the very bottom of its twinstack (either through welding or through
pushing from the input) where $k$ is the number of elements in its
r-twinstack.

Viewed in this way, the epoch below $E$ is presented with opportunities
to modify its twinstack. It may ignore some of these signals (leaving
its r-twinstack unchanged and the epoch $E$ unterminated). At some
point, though, the epoch bellow will accept one of these signals,
modify its r-twinstack, and end epoch $E$.

We are now ready to present the subproblem definition which we will
use for the relativistic enumeration algorithm. Let us define a new
map\begin{align*}
h\left(m,k\right)\end{align*}
to count the number of r-histories (alternatively the number of epochs)
which take $m$ steps and then end by sending signal $k$.

More generally, we want to consider starting not just from the r-state
with the empty pile of r-twinstack, but from the r-state whose pile
contains exactly one r-twinstack $S$ which may or may not be empty.
Thus we let \begin{align*}
h\left(S,m,k\right)\end{align*}
 be the map counting the number of {}``r-histories'' starting with
r-twinstack $S$, taking $m$ steps, and then ending signal $k$.
(We place r-histories in quotes because, formally, we defined r-histories
to only start with the empty r-state.)

Suppose we can give an efficient recursive calculation for $h\left(S,m,k\right)$.
(We will soon do so.) Then we will have solved
the problem of enumerating the number of parallel stack sortable permutations
of length $n$, since this is equal (by Lemma 7.2) to the number of
r-histories of length $n$ which end with all elements in the output,
and these are counted by $h\left(\left(\begin{array}{cc}
\  & \ \end{array}\right),n,0\right)$ (where $\left(\begin{array}{cc}
\  & \ \end{array}\right)$ denotes the empty twinstack).

Therefore, all that remains is to show how to compute $h\left(S,m,k\right)$
recursively. The recursion will be on the number of steps, $m$.

\subsubsection*{Recursive Case ($m>1$):}

Consider first the case where $S=\left(\begin{array}{cc}
\  & \ \end{array}\right)$. Clearly, the first step can transition to one of two different states.
Either the state whose r-twinstack contains one element, $\left(\begin{array}{cc}
\bullet & \ \end{array}\right)$, or (with an immediate pop to the output) the state whose r-twinstack is still empty, $\left(\begin{array}{cc}
\  & \ \end{array}\right)$. Therefore,\begin{align*}
h\left(\left(\begin{array}{cc}
\  & \ \end{array}\right),m,k\right) & =h\left(\left(\begin{array}{cc}
\bullet & \ \end{array}\right),m-1,k\right)+h\left(\left(\begin{array}{cc}
\  & \ \end{array}\right),m-1,k\right)\end{align*}

Now consider the case where the r-twinstack $S$ is not the empty
r-twinstack. There are two possible subcases. The first subcase is
where the r-twinstack $S$ first changes to some r-twinstack $S'\neq S$
after $i$ steps for some integer $i$ less than $m$. This change
must correspond to a signal, $j$, received from the epoch ending
when the change occurs. 

\begin{figure}[ht]
\center{\includegraphics[scale=1.0]{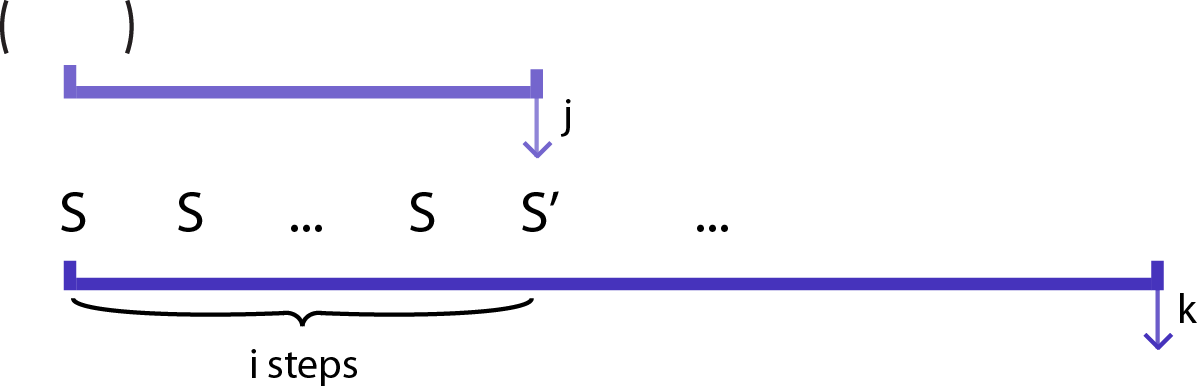}}
\caption{Example of signal passing in the case where the nonempty r-twinstack 
$S$ changes after $i$ steps for some integer $i$ less than $m$}
\end{figure}

For each selection of $1\leq i<m$, once can consider every possible
signal $j$ that could be recieved (since each such $j$ corresponds
to a distinct subset of the possible epochs at the next level beginning
at the start and lasting for $i$ steps). Then, for each possible
signal $j$, there may be many possible $S'$s that are reachable
from $S$ given that signal. Thus the number of epochs belonging to
this subcase is given by.
\begin{align*}
\sum_{i=1}^{m-1}\sum_{j\geq0}\left[h\left(\left(\begin{array}{cc}
\  & \ \end{array}\right),i,j\right)\sum_{{S'\neq S\text{ reachable}\atop {\text{from }S\text{ with}\atop \text{signal }j}}}h\left(S',m-i,k\right)\right]
\end{align*}

The second subcase is when the epoch's r-twinstack, $S$, remains
unchanged all the way till then end of the epoch. Finally, after $m$
steps, the epoch must receive a signal $k'$ from its child epoch
which causes it to send signal $k$ to its parent epoch. 

\begin{figure}[ht]
\center{\includegraphics[scale=1.0]{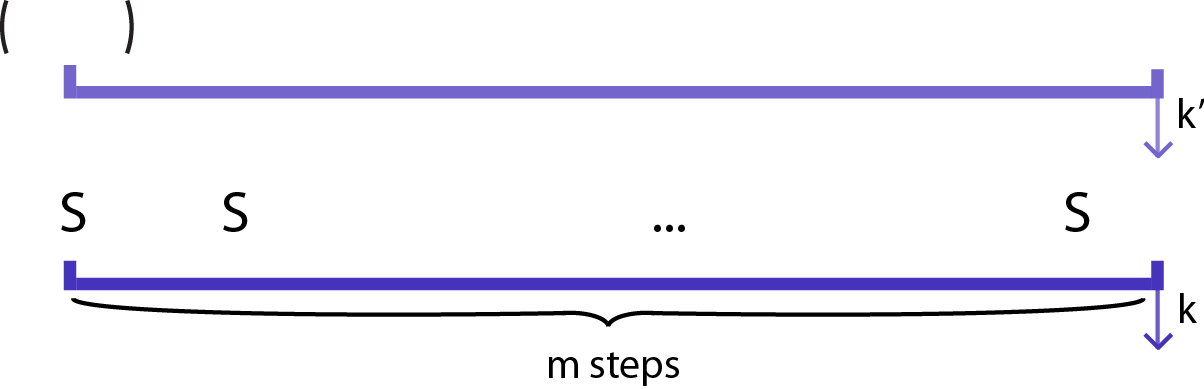}}
\caption{Example of signal passing in the case where the nonempty r-twinstack 
$S$ remains unchanged up till the end of the epoch}
\end{figure}

Because the epoch is expecting to send signal $k$ immediately upon
receiving signal $k'$, the set of signals that it can receive and
still accomplish this with is very limited. For example, if the epoch
intends to send the signal $k=0$, then this cannot be accomplished
if the received signal $k'$ is nonzero (a weld signal). However,
this can always be accomplished if $k'$ is equal to zero. Thus, for
$k=0$, $k'$ must be uniquely determined to also be zero.

When, on the other hand, the signal $k$ is greater than zero, this
indicates that the epoch ends by welding $k$ elements. A welding
end to the epoch clearly cannot occur if the signal received and accepted
from the child epoch is $k'=0$ (since that would indicate that we
pop from $S$ rather than welding its elements down). If $k'$ is
a weld signal, then the set of elements welded down from the epoch
must include every element it $S$ along with each element counted
by $k'$. Therefore, the only value which could possibly work for
$k'$ is the difference $k-\left|S\right|$ (where $\left|S\right|$
naturally denotes the number of elements in r-twinstack $S$). Notice
that whenever $S$ is one-sided, it is always possible to send signal
$k$ upon reception of signal $k'=\left(k-\left|S\right|\right)$.
Alternatively, whenever $S$ is double-sided, it is impossible to
send a weld signal, since this requires receiving a new element $i$
which is the largest in the r-twinstack $S$ which would cause the
sorting algorithm to fail (and we are only considering successful
r-histories of the sorting algorithm). Thus $k'$, if it exists, is
uniquely determined as a function of $S$ and $k$. By choosing $k'=-1$
when no $k'$ could allow us to send signal $k$, we get the following
formula for $k'\left(S,k\right)$. 
\begin{align*}
k'\left(S,k\right) & =\begin{cases}
0 & \text{if }k=0\\
\left|S\right|-k & \text{if }S\text{ is one-sided and }\left|S\right|<k\\
-1 & \text{otherwise}\end{cases}
\end{align*}

The number of epochs belonging to this second subcase is thus\begin{align*}
h\left(\left(\begin{array}{cc}
\  & \ \end{array}\right),m,k'\right) & =h\left(\left(\begin{array}{cc}
\bullet & \ \end{array}\right),m-1,k'\right)+h\left(\left(\begin{array}{cc}
\  & \ \end{array}\right),m-1,k'\right)\end{align*}

(This works with the choice of $k'=-1$ when no signal $k'$ could
enable the sending of signal $k$ because $h\left(S,m,-1\right)$
is zero for all $S$ and $m$.)

Together, the two subcases where $S$ changes to $S'$ after $i<m$
steps and where $S$ remains unchanged until after $m$ steps clearly
count all possibilities for the epochs starting with r-twinstack $S$,
taking $m$ steps, and then ending by sending signal $k$. Therefore
we get the following recursive definition of $h\left(S,m,k\right)$
for the case where $S$ is not the empty r-twinstack.\begin{align*}
h\left(S,m,k\right) & =\sum_{i=1}^{m-1}\sum_{j\geq0}\left[h\left(\left(\begin{array}{cc}
\  & \ \end{array}\right),i,j\right)\sum_{{S'\neq S\text{ reachable}\atop {\text{from }S\text{ with}\atop \text{signal }j}}}h\left(S',m-i,k\right)\right]\\
 & \qquad+h\left(\left(\begin{array}{cc}
\bullet & \ \end{array}\right),m-1,k'\right)+h\left(\left(\begin{array}{cc}
\  & \ \end{array}\right),m-1,k'\right)\end{align*}

We can arrive at a single recursive formula for $h\left(S,m,k\right)$
in both the case where $S$ is and is not the empty r-twinstack by
using the indicator function $\mathds{1}\left\{ \left|S\right|>0\right\} $
. Then \begin{align*}
h\left(S,m,k\right) & =\mathds{1}\left\{ \left|S\right|>0\right\} \sum_{i=1}^{m-1}\sum_{j\geq0}\left[h\left(\left(\begin{array}{cc}
\  & \ \end{array}\right),i,j\right)\sum_{{S'\neq S\text{ reachable}\atop {\text{from }S\text{ with}\atop \text{signal }j}}}h\left(S',m-i,k\right)\right]\\
 & \qquad+h\left(\left(\begin{array}{cc}
\bullet & \ \end{array}\right),m-1,k'\right)+h\left(\left(\begin{array}{cc}
\  & \ \end{array}\right),m-1,k'\right)\end{align*}
whenever $m$ is greater than $1$.

Since this recursive formula gives $h\left(S,m,k\right)$ in terms
of the values of the $h$ function for strictly smaller numbers of
steps, all that remains is to describe how to calculate the base case
$h\left(S,1,k\right)$ for all $S$ and $k$.

\subsubsection*{Base Case ($m=1$):}

It is immediately clear that $h\left(S,1,0\right)$ is always $1$
regardless of the value of $S$. (There is always a unique r-history
of length $1$ which ends by popping, namely the history created by
taking the next element of the input, popping it, and then popping
everything else as well.)

For the case where $k>0$ (where the epoch ends by welding, there
is always at most one r-history of length $1$ which ends by welding
$k$ elements, because any such history must take the next element
$i$ from the input and then weld it to the bottom of the r-twinstack.
These situation actually exactly parallels the second subcase of the
recursive case, in which $S$ remains unchanged until after the $m$th
step and we wanted to find a signal $k'$ whose reception would allow
the sending of signal $k$. Here, however, the signal $k'$ is limited
to being $1$. Thus we can only send signal $k$ when $S$ is one-sided
and $\left|S\right|=k-1$.

Therefore, the value of $h\left(S,1,k\right)$ is given succinctly
by\begin{align*}
h\left(S,1,k\right) & =\begin{cases}
1 & \text{if }k=0\text{ or if }S\text{ is one-sided and }\left|S\right|=k-1\\
0 & \text{otherwise}\end{cases}\end{align*}

Together with the previously derived recursive case, this gives a
complete description of $h\left(S,m,k\right)$.

We claim that this recursive description can easily be turned into
a memoized dynamic algorithm to compute $\left|\mathcal{C}_{n}\right|=h\left(\left(\begin{array}{cc}
\  & \ \end{array}\right),n,0\right)$ in $O\left(n^{5}2^{n}\right)$ time. However, since this algorithm
actually shows up as a special case of the version for the deque case,
we will wait to describe it in the next section.

\section{The General Relativistic Algorithm for Counting Sortable Permutations}
We now wish to derive a modified version of the recursive function
from the previous section which can be used to calculate the number
of deque sortable length-$n$ permutations. We begin by reviewing
the modifications that were needed to adapt the Rosenstiehl and
Tarjan's algorithm to work for deques. Recall that there were two
such modifications.
\begin{enumerate}
\item Whenever an element is welded down to become the very bottom element
of the bottom twinstack, we tuck that element under the side stack
containing any other elements (instead of leaving it on the opposite
side and thus creating a double-sided twinstack).
\item Whenever an operation popping an element element from the bottom twinstack
causes the deque to become monotonic (so that all the elements except
possibly the largest one are together in one of the side stacks),
we tuck the largest element as needed to make the twinstack single-sided.
\end{enumerate}
Clearly, these changes induce changes in the possible transitions
for an r-state. We can easily visualize the result of these changes.

\begin{figure}[ht]
\center{\includegraphics[scale=1.0]{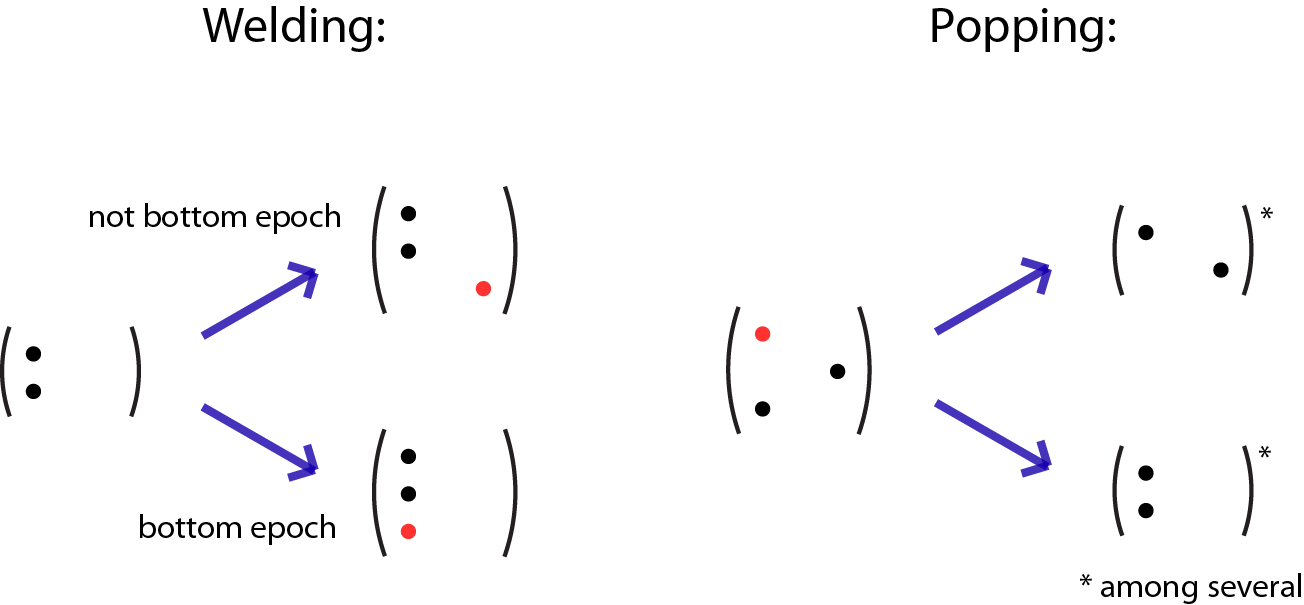}}
\caption{Transition differences}
\end{figure}

However, it is important to note that these changes only affect the
transitions that involve the actual bottom r-twinstack. Therefore,
for any higher level epochs, the number of r-histories remains completely
unchanged. Thus, our subproblem decomposition into epochs will involve
some subproblems using the new transition rules for their r-twinstacks,
as well as some subproblems using exactly the same transition rules
that we considered for the parallel stack case. This motivates the
definition of a new map $h\left(S,m,k,b\right)$ which gives the number
of epochs/r-histories starting with r-twinstack $S$, taking $m$
steps and then ending by sending signal $k$, given the transition
rules for bottom r-twinstacks if $b=1$ and given the transition rules
for non bottom twinstacks if $b=0$. Here, the new argument, $b$,
is simply a binary flag telling whether the r-histories we are counting
are at the bottom level or not. 

Note that $h\left(S,m,k,0\right)$ is just our previous map $h\left(S,m,k\right)$,
so $h\left(\left(\begin{array}{cc}
\  & \ \end{array}\right),n,0,0\right)$ still counts $\left|\mathcal{C}_{n}\right|$. When we consider $h\left(\left(\begin{array}{cc}
\  & \ \end{array}\right),n,0,1\right)$, however, we are counting the number of bottom level successful r-histories
of length $n$ using the deque transition rules. Thus $h\left(\left(\begin{array}{cc}
\  & \ \end{array}\right),n,0,1\right)$ counts $\left|\mathcal{D}_{n}\right|$, and an algorithm which can
compute $h\left(S,m,k,b\right)$ can compute the number of sortable
permutations of length-$n$ for both parallel stacks and deques. (This
was our motivation for delaying a complete algorithm description in
the previous section.)

\subsubsection*{Constructing the Recursive Formula for $h\left(S,m,k,b\right)$:}

There are two ways in which the recursive formulas for $h\left(S,m,k\right)$
needs to be modified to give recursive formulas for $h\left(S,m,k,b\right)$.
We need to correctly choose the values of $b$ to be passed to the
recursive calls, and we need to use the correct transition rules given
the passed parameter $b$.

The transition rule appears nowhere in the base case, and only in
two places in the recursive case. When we consider r-histories which
start with r-twinstack $S$ and then modify their r-twinstack to $S'$
after $i$ steps, we summed over all $S'\neq S$ such that $S'$ was
reachable from $S$ given signal $j$. For the new formula, the set
of states which are reachable depends on the value of $b$, so we
simply change the sum to be over all $S'\neq S$ such that $S'$ is
reachable from $S$ given signal $j$ and the transition rules corresponding
to $b$. 

The second place that the transition rule appears is in our statement
that the only two r-twinstack states that can be transitioned to from
the empty r-twinstack $\left(\begin{array}{cc}
\  & \ \end{array}\right)$ in one step are the empty r-twinstack, and the r-twinstack with one
element, $\left(\begin{array}{cc}
\bullet & \ \end{array}\right)$. Clearly, this is still the case regardless of whether we are using
the transition rules for the bottom epoch or not, so the addition
of $b$ does not necessitate any change to that section of the formula.

Regarding passing the correct values of $b$ to the recursive calls,
we simply need to identify which recursive calls are counting epochs
at the current level (these are passed the current value of $b$)
and which recursive calls are counting epochs at the next level up
(these are always passed $0$ as their last argument). Thus we get
the following recursive formula for $h\left(S,m,k,b\right)$ when
$m>1$.

\begin{align*}
h\left(S,m,k,b\right) & =\mathds{1}\left\{ \left|S\right|>0\right\} \sum_{i=1}^{m-1}\sum_{j\geq0}\left[h\left(\left(\begin{array}{cc}
\  & \ \end{array}\right),i,j,0\right)\sum_{{S'\neq S\text{ reachable}\atop {\text{from }S\text{ with}\atop \text{signal }j\text{ given }b}}}h\left(S',m-i,k,b\right)\right]\\
 & \qquad+h\left(\left(\begin{array}{cc}
\bullet & \ \end{array}\right),m-1,k',\mathds{1}\left\{ \left|S\right|=0\right\} \right)+h\left(\left(\begin{array}{cc}
\  & \ \end{array}\right),m-1,k',\mathds{1}\left\{ \left|S\right|=0\right\} \right)\end{align*}

The formula for the base cases of $h\left(S,m,k,b\right)$ remains
unchanged (so $h\left(S,1,k,1\right)=h\left(S,1,k,0\right)=h\left(S,1,k\right)$).

\subsubsection*{The Algorithm:}

We are now ready to describe an efficient memoized dynamic program
algorithm to compute $h\left(S,m,k\right)$. 

We will begin by describing the helper function Get-R-Twinstack-Transition-List
to efficiently compute all the $S'$s we can transition to given a
specific $S$, $j$, and $b$. Note that throughout the following
implementations it will be convenient to always place the smallest
element on the left stack.

Consider the following psuedocode for Get-R-Twinstack-Transition-List.

\begin{minipage}{\linewidth} 
\begin{algorithm}[H]
\SetAlgoRefName{Get-R-Twinstack-Transition-List$(S,j,b)$}
\DontPrintSemicolon   
\ProcSty{Procedure} \GetRTwinstackTransitionList{$S,j,b$}\;
let $result$ be a new list\;
let $l_{S}$ be the number of elements in $S$\;
let $z_{S}$ be the index of the first zero element in $S$ (equal to $l_{S}$ if $S$ contains no zeros)\;
let $m_{S}$ be the maximum size postfix of $S$ which has the property of being monotonic\;
\If{$j > 0$}{
  let $smallElements$ be a new list containing $j-1$ ones\;
  \For{$i = 1$ \KwTo $z_{S}$}{
    let $X$ be a new copy of $S$\;
    insert a $0$ into X at index $i$\;
    \If{$b == 1\  {\bf and}\ i = l_{S}$}{
      set the last element of $X$ to be a $1$\;
    }
    prepend $smallElements$ to $X$\;
    add $X$ to $result$\;
  }
}
\Else{
  \For{$i = 0$ \KwTo $l_{S}$}{
    let $X$ be a new copy of $S$\;
    remove the first $i$ elements from $X$\;
    \If{the first element of $x$ is a $0$}{
      switch the value of each element of $X$\;
    }
    \If{$b==1$\  {\bf and}\  $\left(l_{S} - i \right)\leq m_{S}$\  {\bf and}\  $X$ is not empty}{
      set the last element of $X$ to be a $1$\;
    }
    add $X$ to $result$\;
  }
  \Return{$result$}
}
\end{algorithm} 
\end{minipage}

\begin{lem}
Get-R-Twinstack-Transition-List is correct.
\end{lem}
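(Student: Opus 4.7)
The plan is to prove correctness by splitting on the value of the signal $j$ and checking, in each branch, that the procedure enumerates exactly those r-twinstacks $S'$ reachable from $S$ via signal $j$ under the transition rule indicated by $b$. Recall from the discussion preceding the lemma that a weld signal $j > 0$ corresponds to the arrival of a large element together with $(j-1)$ strictly small elements, while $j = 0$ corresponds to the epoch above popping all of its contents and then exposing $S$ for a further pop. Throughout, I would track the convention that the smallest remaining top element is kept on the left stack, since this convention is what makes the procedure produce canonical representatives.

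First I would handle the weld case $j > 0$. Here the contents being welded onto $S$ consist of $(j-1)$ elements that are smaller than everything in $S$ (these always land on one side, which the procedure fixes as the left by prepending $j{-}1$ ones to $smallElements$) together with a single large element that must be placed on the opposite side from some existing element of $S$ in order to produce a normal r-twinstack. I would argue that the admissible positions for this large element, indexed by how many of the current left-stack elements sit above it, are precisely $i = 1, \ldots, z_S$ (any position strictly beyond the first $0$ would violate normality). The insertion of a $0$ at index $i$ in the copy $X$ models exactly this placement. The extra clause when $b = 1$ and $i = l_S$ captures the bottom-twinstack tuck rule: when the large element would land beneath the deepest existing element of $S$, it must instead be placed on the occupied side, which corresponds to flipping that entry from $0$ to $1$. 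Finally, prepending $smallElements$ and adding $X$ to the result yields the canonical representative. Distinctness across different $i$ is immediate from the structure of the inserted element.

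Next I would handle the pop case $j = 0$. Here some prefix of $k$ elements (counted from the top) is removed from $S$ by successive pop operations of the child epoch; the parent may accept the signal at any such prefix length $i = 0, 1, \ldots, l_S$. For each $i$, removing the first $i$ entries of $S$ yields the raw resulting pile; the subsequent switch of all entries of $X$ when its first element is $0$ re-imposes the smallest-on-left convention. For $b = 0$ this is the full story. For $b = 1$ I would argue that the monotonic-deque tuck rule applies exactly when, after removing $i$ elements, the remaining twinstack is monotonic, equivalently when the suffix of $S$ of length $l_S - i$ is contained in the longest monotone postfix $m_S$; in that case the bottom element (the largest remaining) is tucked to the occupied side, which corresponds to forcing the last entry of $X$ to $1$. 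The non-empty guard prevents a degenerate final state. Once again the produced states are pairwise distinct.

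Having established that each branch produces every reachable $S'$ exactly once and nothing else, the lemma follows. The main obstacle I anticipate is the bookkeeping surrounding the tuck rules: in the weld branch one must be careful that the special-case flip when $i = l_S$ does not accidentally coincide with, or omit, a case already accounted for by some smaller $i$; in the pop branch one must verify that the characterization of monotonicity of the resulting twinstack really is captured by the comparison $l_S - i \le m_S$ after the conditional switch, and that the switch itself does not interact badly with the tuck. These are essentially small parity checks on the binary-string encoding, and I would resolve them by explicitly enumerating the shapes of $S$ near its bottom and comparing the procedure's output to the definition of a transition in $\mathfrak{E}$.
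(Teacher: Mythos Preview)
Your proposal is correct and follows essentially the same route as the paper's own proof: a case split on whether the signal $j$ is a weld ($j>0$) or a pop ($j=0$), followed by verifying in each branch that the loop enumerates precisely the admissible placements of the large element (respectively, the admissible prefix-removals), with the $b=1$ tuck rule handled as a final adjustment. Your closing paragraph on distinctness and on the interaction between the switch and the tuck is a bit more cautious than the paper, which simply asserts the returned list is correct without separately checking for collisions; but this is a refinement of the same argument, not a different one.
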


\begin{proof}
First consider the case where $j$ is greater than zero. Here, we
are welding $j$ elements onto the r-twinstack. As discussed previously,
exactly one of these elements, $y$, must be larger than at least
one element in $S$. The other $k-1$ element must be smaller than
than every element in $S$ and on the opposite side of the incoming
weld from the element $y$. Since the $k-1$ smaller elements (if
any) will be the smallest elements in each of the resulting r-twinstacks,
we place them on the left side at the beginning of each new r-twinstate,
$X$. We then generate every every new r-twinstack $X$ over all possible
placements of the element $y$ into the right side of $X$ after the
first element of $S$ and before the first right-side element of $S$.
This process enumerates all of the possible transition states if the r-twinstack
is not the bottom stack (as indicated by $b=0$). 

Alternatively, if
$b=1$ and the last of the enumerated $X$s placed the
element $y$ as the last element, we move $y$ to the bottom of the
left stack instead (to preserve one-sidedness of the monotonic state).
Thus the returned list of $X$s is exactly the correct set of transition
r-twinstacks.

Now consider the case where $j$ is zero. Here we enumerate the results
of popping any number of the smallest elements (from none of them
to all of them). If the set of elements popped would leave the top
(smallest) element on the right stack, we reverse the resulting r-twinstack
$X$ to maintain the invariant that the smallest element is always
on the left. This will correctly give every possible transition r-twinstack
when $S$ is not the bottom r-twinstack. 

For the case when $b=1$
(indicating that $S$ is the bottom r-twinstack), we check whether each
resulting r-twinstack $X$ is monotonic. If it is, it must either
have the form of having no right stack elements, or having a single
largest element in the right stack after some number of left stack
elements. We simply change the latter case to the former by specifying
that the last element, whatever it is, must be in the left stack.

Thus, for each triple of arguments $S$, $j$, and $b$, Get-R-Twinstack-Transition-List$\left(S,j,b\right)$
returns a list with each r-twinstack $S'$ reachable from $S$ upon
reception of signal $j$ given $b$. 
\end{proof}

\begin{lem}
Get-R-Twinstack-Transition-List can be implemented to run in $O\left(\left|S\right|\right)$
time. 
\end{lem}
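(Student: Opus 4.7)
The plan is to show two things: (i) the auxiliary quantities $l_S$, $z_S$, and $m_S$ used throughout the routine can each be extracted from $S$ in a single linear scan, and (ii) the two main loops, which a naive reading of the pseudocode would perform $\Theta(|S|)$ explicit copies of $S$ and therefore would cost $\Theta(|S|^2)$, can in fact be implemented in $O(|S|)$ total time.

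For (i), a single left-to-right pass over $S$ yields both the length $l_S$ and the index $z_S$ of its first $0$; a single right-to-left pass determines the longest monotonic suffix and hence $m_S$. Both passes are $O(|S|)$ and can even be fused into one, so the preprocessing block before either branch of the conditional costs only $O(|S|)$.

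For (ii), the main obstacle is exactly the line ``let $X$ be a new copy of $S$'' inside each of the two loops: if $X$ is materialized as a fresh length-$|S|$ string every iteration, the total cost is quadratic. My fix is to represent each output r-twinstack implicitly by a constant-size handle that points into a single shared copy of $S$. In the weld branch ($j>0$), the $i$th output is completely specified by the triple (pointer to the shared prefix of $j-1$ ones, pointer to $S$, insertion position $i$) together with one extra bit recording whether the tail element has been toggled in the $b=1$ edge case. In the pop branch ($j=0$), every output is either a suffix of $S$ or the bitwise complement of a suffix, optionally with its tail bit forced to $1$, so it is again described by a constant-size handle (starting index $i$, flip bit, tail-fix bit). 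With this representation each iteration produces one handle in $O(1)$ time, and since the loops each run at most $l_S+1$ times, their combined cost is $O(|S|)$.

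Adding the $O(|S|)$ preprocessing and the $O(|S|)$ loop cost gives the claimed bound. The only point that needs a brief sanity check is that downstream callers — principally the recursion for $h(S,m,k,b)$, which queries the shape of each $S'$ (its length, whether it is one-sided, the sizes of its two sides) — can extract the information they actually need directly from a handle in $O(1)$ time; this is immediate, since each of those statistics is determined by the shared $S$ together with the constant-size modification carried by the handle, combined with the cached preprocessing data.
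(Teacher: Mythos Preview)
Your proof is correct but takes a genuinely different route from the paper's. The paper's argument is much more direct: it observes that for any value of $n$ one could actually compute, an r-twinstack fits inside a single machine word, so each line inside the two loops --- copying $S$, inserting a bit, flipping all bits, shifting off a prefix, setting the last bit --- becomes a constant-time bitwise or shift operation. With that representation the pseudocode runs in $O(|S|)$ \emph{as written}, and the only non-constant work is the precomputation of $l_S$, $z_S$, $m_S$ and the loop bounds themselves.

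Your handle-based implementation instead avoids materializing the copies altogether, returning constant-size descriptors that point into a shared copy of $S$. This buys independence from the word-RAM assumption (you never need $|S|$ to fit in a machine word), at the price of changing the output format and having to check, as you do, that downstream consumers can read off the statistics they need from a handle in $O(1)$. Both arguments are valid; the paper's is shorter because the bit-packing trick lets the ``copy $S$'' line stand unchanged, while yours is more model-robust but carries extra bookkeeping.
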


\begin{proof}
For every conceivably computable value of $n$, we can choose an integer
datatype having enough bits to represent each possible twinstack.
Let the last $l_{S}$ of these hold the bits indicating the position
of elements in $S$, and let all the higher order bits be zero. Then
we can perform all of the required operations of copying $S$, inserting
elements into $S$, setting elements of $S$, reversing elements of
$S$, and shifting elements of $S$, in constant time on standard
architectures using bit-shift and bitwise logic operations. 

Therefore,
the only parts of this algorithm contributing a non-constant amount
to the runtime are the computations of the variables $l_{S}$, $z_{S}$,
and $m_{S}$, and the two loops. Clearly, each of $l_{S}$, $z_{S}$,
and $m_{S}$ can be computed in $O\left(\left|S\right|\right)$ time.
Additionally, since the loops loop over $z_{S}$ and $l_{S}+1$ indices
respectively, and $z_{S}$ and $l_{S}$ are each bounded above by $\left|S\right|$,
the loops also contribute no more than $O\left(\left|S\right|\right)$
time.

Therefore Get-R-Twinstack-Transition-List can be implemented to run
in $O\left(\left|S\right|\right)$ time. 
\end{proof}

Now consider the psuedocode for the full algorithm.

\begin{minipage}{\linewidth} 
\begin{algorithm}[H]
\SetAlgoRefName{Relativistic-Histories$(S,m,k,b)$}
\DontPrintSemicolon   
\ProcSty{Procedure} \RelativisticHistories{$S,m,k,b$}\;
${\bf global}\ dictionary$\;
\If{$dictionary.hasKey((S,m,k,b))$}{
  \Return{$dictionary.getVal((S,m,k,b))$}
}
\If{$m==1$}{
  \tcc{the base case}
  \If{$k==0\ {\bf or}\ (\left|S\right|==\left(k - 1\right)\ {\bf and}\ S$ is one-sided$)$}{
    $result=1$\;
  }
  \Else{
    $result=0$\;
  }
  $dictionary.add((S,m,k,b),result)$\;
  \Return{$result$}  
}
\Else{
  \tcc{the recursive case}
  \If{$\left|S\right| == 0$}{
    $result = \RelativisticHistories{$\left(\begin{array}{cc}\bullet & \ \end{array}\right),m-1,k,b$} + \RelativisticHistories{$\left(\begin{array}{cc}\ & \ \end{array}\right),m-1,k,b$}$\;
  }
  \Else{
    $result = 0$\;
    $k' = \GetKPrime{$S,k$}$\;
    \If{{\bf not}\ $k' == -1$}{
      $result += \RelativisticHistories{$\left(\begin{array}{cc}\bullet & \ \end{array}\right),m-1,k',0$} + \RelativisticHistories{$\left(\begin{array}{cc}\ & \ \end{array}\right),m-1,k',0$}$\;
    }
    \For{$i = 1$ \KwTo $m-1$}{
      \For{$j = 0$ \KwTo $i+1$}{
        let $transitionList = \GetRTwinstackTransitionList{$S,j,b$}$\;
        \For{$S'\ {\bf in}\ transitionList$}{
          \If{{\bf not}\ $S'==S$}{
            $result += \RelativisticHistories{$\left(\begin{array}{cc}\ & \ \end{array}\right),i,j,0$}\cdot\RelativisticHistories{$S',m-i,k,b$}$\;
          }
        }
      }
    }
  }
  $dictionary.add((S,m,k,b),result)$\;
  \Return{$result$}  
}
\end{algorithm} 
\end{minipage}

Where Get-KPrime$(S,k)$ is just the previously described map 
\begin{align*}
k'\left(S,k\right) & =\begin{cases}
0 & \text{if }k=0\\
\left|S\right|-k & \text{if }S\text{ is one-sided and }\left|S\right|<k\\
-1 & \text{otherwise}\end{cases}
\end{align*}

\begin{minipage}{\linewidth} 
\begin{algorithm}[H]
\SetAlgoRefName{Relativistic-Sortable-Count$(n,b)$}
\DontPrintSemicolon   
\ProcSty{Procedure} \RelativisticSortableCount{$n,b$}\;
\If{$b==1$}{
  \tcc{the deque case}
  \Return{\RelativisticHistories{$\left(\begin{array}{cc}\ & \ \end{array}\right),n,0,1$}}
}
\Else{
  \tcc{the parallel stack case}
  \Return{\RelativisticHistories{$\left(\begin{array}{cc}\ & \ \end{array}\right),n,0,0$}}
}
\end{algorithm} 
\end{minipage}

\begin{thm}
Relativistic-Sortable-Count is correct. That is, when called with $b=1$,
it returns $\left|\mathcal{D}_{n}\right|$, and when called with with
$b=0$ it returns $\left|\mathcal{C}_{n}\right|$.
\end{thm}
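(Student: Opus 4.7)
The plan is to reduce correctness of Relativistic-Sortable-Count to correctness of Relativistic-Histories as a computation of the function $h(S,m,k,b)$, and then invoke the identifications $h\left(\left(\begin{array}{cc}\ & \ \end{array}\right),n,0,0\right)=|\mathcal{C}_{n}|$ and $h\left(\left(\begin{array}{cc}\ & \ \end{array}\right),n,0,1\right)=|\mathcal{D}_{n}|$ that were established when the recurrence was derived. These identifications follow from Lemma 7.2 together with the epoch decomposition: by definition $h\left(\left(\begin{array}{cc}\ & \ \end{array}\right),n,0,b\right)$ counts r-histories of length $n$ whose bottom epoch ends by popping every element to the output, and these are exactly the length-$n$ r-histories on the transition graph corresponding to the chosen sorting model, which by Lemma 7.2 are in bijection with the sortable permutations of length $n$.

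Next I would prove by strong induction on $m$ that for all $S$, $k$ and $b\in\{0,1\}$, the call Relativistic-Histories$(S,m,k,b)$ returns $h(S,m,k,b)$. For $m=1$, matching the conditional in the pseudocode against the closed form $h(S,1,k,b)$ given in the previous section is purely mechanical. For the inductive step, assume the claim for all $m'<m$. When $|S|=0$, the code returns the sum of the two recursive calls on $\left(\begin{array}{cc}\bullet & \ \end{array}\right)$ and $\left(\begin{array}{cc}\ & \ \end{array}\right)$ with $m-1$, $k$, and the same $b$, matching the empty-$S$ case of the recurrence. When $|S|>0$, the code adds the ``$S$ unchanged'' contribution using Get-KPrime to produce $k'(S,k)$ (correctly contributing zero when $k'=-1$, since memoization will return $0$), and then iterates over all pairs $(i,j)$ with $1\le i\le m-1$ and $0\le j\le i+1$, enumerating the reachable $S'\neq S$ via Get-R-Twinstack-Transition-List, which is correct by Lemma 9.1. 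Each recursive call returns the correct value by the inductive hypothesis, and the child-epoch counts use $b=0$ exactly as prescribed, so the accumulated sum equals the right-hand side of the recurrence.

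Two bookkeeping points are routine. First, truncating the inner loop at $j\le i+1$ loses no contribution, because any epoch of length $i$ can accumulate at most $i$ elements before terminating, so $h\left(\left(\begin{array}{cc}\ & \ \end{array}\right),i,j,0\right)=0$ for all larger $j$. Second, memoization through the shared dictionary is sound: the recurrence expresses $h(S,m,k,b)$ only in terms of values with strictly smaller $m$ (the call to Get-KPrime is not recursive), so there are no cyclic dependencies and each key is computed at most once.

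The main obstacle I anticipate is verifying that the combination of Get-R-Twinstack-Transition-List and the outer iteration faithfully reproduces the double sum in the recurrence without double counting. In particular, one must check that the guard $S'\neq S$ together with the separate ``$S$ unchanged'' term captures exactly the partition of epochs into those that modify $S$ strictly before step $m$ and those that do not, and that distinct choices of $(j,S')$ enumerate genuinely distinct sub-histories. Once that bookkeeping is settled, correctness of Relativistic-Sortable-Count$(n,b)$ is immediate, since it merely invokes Relativistic-Histories on the empty r-twinstack with the appropriate flag, yielding $|\mathcal{C}_{n}|$ when $b=0$ and $|\mathcal{D}_{n}|$ when $b=1$.
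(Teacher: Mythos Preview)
Your proposal is correct and follows essentially the same approach as the paper; indeed the paper's proof is much terser than yours, simply referring back to the preceding derivation of the recurrence for $h(S,m,k,b)$ and then invoking Lemma~7.2, whereas you spell out the strong induction on $m$ and the loop-bound bookkeeping explicitly. One small inaccuracy: when $k'=-1$, the zero contribution is enforced by the explicit guard \texttt{if not $k'==-1$} in the pseudocode, not by ``memoization returning $0$''; the conclusion is unaffected.
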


\begin{proof}
Rather than offer a lengthy proof here, we simply refer the reader
to the reasoning in the above sections to see that the correct recursive
formula for $h\left(S,m,k,b\right)$ is indeed 

\begin{align*}
h\left(S,m,k,b\right) & =\mathds{1}\left\{ \left|S\right|>0\right\} \sum_{i=1}^{m-1}\sum_{j\geq0}\left[h\left(\left(\begin{array}{cc}
\  & \ \end{array}\right),i,j,0\right)\sum_{{S'\neq S\text{ reachable}\atop {\text{from }S\text{ with}\atop \text{signal }j\text{ given }b}}}h\left(S',m-i,k,b\right)\right]\\
 & \qquad+h\left(\left(\begin{array}{cc}
\bullet & \ \end{array}\right),m-1,k',\mathds{1}\left\{ \left|S\right|=0\right\} \right)+h\left(\left(\begin{array}{cc}
\  & \ \end{array}\right),m-1,k',\mathds{1}\left\{ \left|S\right|=0\right\} \right)\end{align*}
when $m>1$ and 

\begin{align*}
h\left(S,1,k,b\right) & =\begin{cases}
1 & \text{if }k=0\text{ or if }S\text{ is one-sided and }\left|S\right|=k-1\\
0 & \text{otherwise}\end{cases}\end{align*}
when $m=1$. Clearly, Relativistic-Histories implements this recursive
formula to return $h\left(S,m,k,b\right)$. 

But we also know from Lemma 7.2 that the number of length-$n$ permutations
sortable on a deque is equal to the number of root r-histories (or
epochs) of length $n$. Since a root r-history can only end by sending
signal $0$ (since there is no r-twinstack below it which it can weld
to), this latter quantity is exactly $h\left(\left(\begin{array}{cc}
\  & \ \end{array}\right),n,0,1\right)$, which is the value returned by Relativistic-Sortable-Count when
$b=1$. Thus Relativistic-Sortable-Count$\left(n,1\right)$ correctly
returns $\left|\mathcal{D}_{n}\right|$. 

Similarly, when Lemma 7.2 also implies that the number of length-$n$
permutations sortable on a pair of parallel stacks is equal to the
number of root r-histories of length $n$ using the transition rules
for parallel stacks. Once again, a root r-history can only end by
sending signal zero. Thus this count is exactly $h\left(\left(\begin{array}{cc}
\  & \ \end{array}\right),n,0,0\right)$, which is the value returned by Relativistic-Sortable-Count when
$b=0$. Thus Relativistic-Sortable-Count$\left(n,0\right)$ correctly
returns $\left|\mathcal{C}_{n}\right|$.
\end{proof}

\begin{thm}
Relativistic-Sortable-Count has time complexity $O\left(n^{5}2^{n}\right)$
and space complexity $O\left(n^{2}2^{n}\right)$.
\end{thm}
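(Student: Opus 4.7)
The plan is to use the standard analysis for memoized dynamic programming: multiply the number of distinct subproblems by the time spent per subproblem (assuming recursive calls inside are already memoized and cost $O(1)$ each for table lookup). So I would begin by counting the size of the memoization table. The key is the tuple $(S,m,k,b)$ where $S$ is a binary string of length at most $n$ (giving at most $\sum_{i=0}^{n} 2^i = O(2^n)$ choices), $m \in \{1,\dots,n\}$, $k \in \{0,1,\dots,n+1\}$, and $b \in \{0,1\}$. Thus there are $O(n^2 2^n)$ distinct subproblems, which immediately gives the claimed space bound (treating each stored integer as a single word, or noting that even storing these counts using $O(n)$-bit integers only changes the bound by a subexponential factor which we can absorb or note separately).

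Next I would bound the time spent inside a single invocation of Relativistic-Histories$(S,m,k,b)$, not counting recursive calls whose cost is charged to their own subproblem slot. The base case ($m=1$) is $O(|S|)$ work for checking one-sidedness, hence $O(n)$. In the recursive case with $|S|=0$, the work is $O(1)$ beyond two recursive calls. The main cost is the double loop: $i$ ranges over $1,\dots,m-1$ and $j$ over $0,\dots,i+1$, so $O(n^2)$ iterations. Each iteration calls Get-R-Twinstack-Transition-List$(S,j,b)$, which by the previous lemma runs in $O(|S|)=O(n)$ time and produces a list of $O(n)$ r-twinstacks; the inner loop over that list then performs $O(n)$ memoized lookups and additions, for $O(n)$ work total per $(i,j)$. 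Hence the per-subproblem cost is $O(n^3)$.

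Multiplying the $O(n^2 2^n)$ subproblems by the $O(n^3)$ cost per subproblem yields the claimed time bound $O(n^5 2^n)$. For space, I would argue that the memoization table dominates: there are $O(n^2 2^n)$ keys, each storing one integer count, and the recursion depth is at most $O(n)$, so the total space is $O(n^2 2^n)$ as claimed.

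The only mildly delicate point — and the step I would be most careful with — is justifying that each memoized lookup and each arithmetic operation on the stored counts costs $O(1)$. The counts can be as large as $\left|\mathcal{D}_n\right| = \Theta(8^n)$, so they occupy $\Theta(n)$ bits. On a standard word-RAM model with $\Theta(n)$-bit words this is $O(1)$; if one insists on the bit-complexity model an additional factor of $n$ appears in both bounds, which I would note parenthetically. Everything else — the counting of binary strings, the loop bounds, and the runtime of Get-R-Twinstack-Transition-List — is routine given the results already established.
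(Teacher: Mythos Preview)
Your proposal is correct and follows essentially the same argument as the paper: bound the number of distinct memoized subproblems by $O(n^{2}2^{n})$ via the ranges of $S$, $m$, $k$, $b$, bound the per-subproblem work by $O(n^{3})$ from the nested loops and the $O(n)$ cost of \texttt{Get-R-Twinstack-Transition-List}, and multiply. Your additional remarks about recursion depth and the word-RAM versus bit-complexity assumption are more careful than the paper's own treatment, but the core analysis is identical.
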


\begin{proof}
First consider the runtime of Relativistic-Sortable-Count. The Relativistic-Sortable-Count
subroutine clearly only contributes constant runtime, so any non-constant
factors must come from calls to Relativistic-Histories. Thus we need
to determine the max runtime of any given call to Relativistic-Histories,
along with the number of such calls that are being made. If Relativistic-Histories
finds that the desired value has already been computed and stored
in the memoization dictionary, then its runtime is (ostensibly) constant.
Otherwise, if $m=1$ then its runtime is still constant. Finally,
if it needs to compute the value for $m>1$, then it does so in a
nested for loop where the two outer loops can iterate order $m=O\left(n\right)$
times. Inside these two loops is the call to Get-R-Twinstack-Transition-List
(with an associated runtime of $O\left(n\right)$) and the third loop
which iterates over the return list (again $O\left(n\right)$). Thus,
in the worst case, Relativistic-Histories histories takes $O\left(n^{3}\right)$time
to run.

When we consider the number of calls made to Relativistic-Histories,
we only need to consider calls made where the desired value has not
yet been computed. (This is because whenever we have memoized the
value for a certain set of args, the runtime of Relativistic-Histories
is constant and is therefore taken care of by the computation for
the runtime of the caller.) The number of such calls to Relativistic-Histories
is limited by the size of the domain of the function $h\left(S,m,k,b\right)$.
Since $S$ can range over binary strings of length $n$, $m$ and
$k$ are both order $n$, and $b$ has only two values. The size of
this domain is $O\left(n^{2}2^{n}\right)$. Therefore, the total runtime
of all calls to Relativistic-Sortable-Count has time complexity $O\left(n^{5}2^{n}\right)$.

The space complexity of Relativistic-Sortable-Count is just the size
of the memoization dictionary, which is limited by the size of the
domain of the map $h\left(S,m,k,b\right)$. Therefore Relativistic-Sortable-Count
has space complexity $O\left(n^{2}2^{n}\right)$. 
\end{proof}

\section{Results Obtained with the Relativistic Algorithm}
As described in section 6, our most efficient implementation of the
old approach for computing $\left|\mathcal{D}_{n}\right|$ and $\left|\mathcal{C}_{n}\right|$
using tree search was only successful for up to $n=14$. In fact,
that implementation was written in C with careful consideration to
factors like avoiding memory allocation, and it still had to be run
overnight in order to compute the results for $n=14$.

Our first (and so far our only) implementation of the relativistic
algorithm is in python using the built in types, with no special emphasis
on efficiency. Such an implementation can be many orders of magnitude
slower than a good C implementation. (For example, we took same approach
of first implementing the tree search algorithm in Python before coding
it in C, and that implementation was limited to $n=10$.) Nevertheless,
because of the greatly improved asymptotic runtime our new Relativistic-Sortable-Count
algorithm, we were able to compute all of the values of $\left|\mathcal{D}_{n}\right|$
for up to $n=21$ in under twelve minutes. Similarly, we computed
$\left|\mathcal{C}_{n}\right|$ for up to $n=22$ in under twenty-two
minutes. We have included these table of numbers as appendix A and
B respectively.

While the new relativistic algorithm is much faster than the previous
best approach, this speed does come with a price. Relativistic-Sortable-Count
has a space complexity of $O\left(n^{2}2^{n}\right)$, as compared
to the linear space complexity of the tree search algorithm. (Note
that this is still an improvement over the space complexity of Zimmermann's
algorithm which had an exponential term whose base was greater than
the growth rate of the permutation class.) Because of this, the algorithm
failed to compute $\left|\mathcal{D}_{22}\right|$ or $\left|\mathcal{C}_{23}\right|$
on the linux machines on which I was running it, presumably because
the python dictionary tried to grow to well over fifteen million
elements which lead to thrashing. 

The simplest possible approaches to this problem would a more efficient
custom hash storage solution, or even just running the algorithm on
a machine with more memory. These could probably be used to acquire
a few more terms of the sequence. A better long term approach (suggested
by Peter Doyle) would be to develop a better understanding of the
dependencies among the values of the map $h\left(S,m,k,b\right)$.
This could be used to try to redesign the algorithm to use an access
order that is less affected by paging to disk. We leave such changes
for future work.

We should note before moving on, however, that fifteen million is much less than $\left(22^{2}\cdot2^{22}\right)$.
Thus the domain of the map is only being sparsely populated, and the $O\left(n^{2}2^{n}\right)$ space complexity (and $O\left(n^{5}2^{n}\right)$ time complexity) limit may 
be quite conservative.

\section{Some Observations On Deque-Sortability Given Imperfect Information}
The problem that initially caused us to start looking at the class
of permutations which were sortable on a deque ($\mathcal{D}$), was
Peter Doyle's proposal of a game he called Double-Ended Knuth (or
DEK for short). To borrow Doyle's description:

\begin{quotation}
DEK is a bare-bones relative of familiar solitaire games
like Klondike. In DEK, we use a one-suit deck consisting of only the
thirteen
hearts (say). We shuffle the deck thoroughly, and place the deck
face down
on the table. The goal is to end with the cards in a pile face up,
running in
order from ace to king. In addition to the deck and the pile (initially
empty),
we maintain a line of cards (initially empty), called the deque, spread
out
face up on the board. At any point, if the next card needed for the
pile is
available as the top card of the deck or at either end of the deque,
we may
move it up to the pile; otherwise, our only option is to move the
top card of
the deck to either end of the deque. \cite{doyle}
\end{quotation} 

Thus, DEK is the problem of sorting a permutation of length $13$
on a deque given imperfect information. If the cards forming the input
permutation were visible face up, then one could simply run our corrected
version of the Rosenstiehl-Tarjan algorithm to determine whether or
not it was sortable and if so how to sort it. Instead, however, we
are forced to make he decision of which end of the deque to add a
the top card of the deck to immediately after having revealed that
card and before viewing any of the other cards remaining in the deck.

The vast majority of our work has been spend investigating the omniscient
case.  Nevertheless, we offer a few remarks about this problem.

\begin{thm}
The distinction between sorting with complete information and sorting
with incomplete information is important. That is, one cannot choose
a strategy for the incomplete information case which will succeed
on all sortable inputs.
\end{thm}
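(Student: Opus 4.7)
The plan is to prove the theorem by producing, for an arbitrary deterministic online strategy $\sigma$, an explicit sortable permutation on which $\sigma$ fails.  I would first formalize $\sigma$ as a function from the sequence of revealed cards (together with the deterministic greedy pop history, whose sufficiency is guaranteed by the reduced-run lemma) to a placement decision in $\{L, R\}$, and observe that $\sigma$'s run on any input is determined by this function alone.

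The key step would be to exhibit a revealed-card prefix $p$ and two sortable permutations $\pi_1 = p \cdot c_1$ and $\pi_2 = p \cdot c_2$ with the following separation property: no single deque state reachable from the empty deque by placing the cards of $p$ (with greedy popping along the way) can be extended to successfully sort both $\pi_1$ and $\pi_2$.  Since $\sigma$ processes the common prefix $p$ identically on the two inputs and must arrive at one specific deque state after $p$, the separation property forces $\sigma$ to mishandle at least one of $\pi_1, \pi_2$.  This reduces the theorem to a concrete combinatorial problem about the structure of sortable-continuation sets.

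The main obstacle will be actually producing such a triple $(p, \pi_1, \pi_2)$.  Because the deque admits a left-right reflection symmetry, sortability is mirror-invariant, so the reachable states after $p$ cluster into mirror-equivalent classes, and sortable-continuation sets are always mirror-closed.  For short prefixes (length at most two, and often length three as well) only a single ``live'' mirror class survives among the reachable states, with the remaining classes being sandwich-dead; in that case any strategy that routes itself into the live class succeeds, and no separation is possible.  The counterexample prefix must therefore be long enough, and chosen carefully enough, that at least two distinct live classes survive and admit disjoint sortable-continuation sets.  I would search among prefixes of length four or five combining small and large values in asymmetric patterns, either by hand or by directly enumerating the r-state graph developed in the relativistic-algorithm section to identify two live mirror classes whose sets of compatible continuations are disjoint.

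Once such an explicit $(p, \pi_1, \pi_2)$ is in hand, the proof is immediate: $\sigma$'s deterministic placements on $p$ yield one specific deque state $S$; by the separation property $S$ cannot extend to at least one of $c_1$ or $c_2$, so $\sigma$ mishandles the corresponding sortable permutation, establishing the theorem.
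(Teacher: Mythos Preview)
Your plan is exactly the paper's argument: exhibit a common prefix $p$ and two sortable extensions $\pi_1,\pi_2$ such that every deque state reachable after $p$ fails on at least one of them, so any deterministic strategy is trapped.  The paper carries this out with the length-$3$ prefix $p = 7\,5\,2$ and the pair $\pi = 7526431$, $\sigma = 7524163$; after placing $7,5,2$ the two live mirror classes are represented by $257$ and $572$, and a short direct check shows the first sorts only $\pi$ while the second sorts only $\sigma$.  So your instinct that length-$3$ prefixes ``often'' leave only one live class is too pessimistic here---no search over longer prefixes (or over the r-state graph) is needed.
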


\begin{proof}
Consider the pair of permutations $\pi=7526431$ and $\sigma=7524163$.
After revealing the first three elements of these permutations and
adding them to the deque, there are (up to reflection) two possible
states for the deque, namely\begin{align*}
\text{a) } & \mbox{257} &  & \text{and} & \text{b) } & 572\end{align*}

Clearly, state a) can be used to sort permutation $\pi$ (by adding
the $6$, $4$, $3$ and then the $1$ to the right side of the deque
and then popping everything). If however, the remainder of the permutation
happens to be $\sigma$, then the sorting attempt will fail since
the $4$ will be forced to be placed to the right of the $7$ and
then the sequence $574$ will still be on the deque when the $6$
must be placed.

Alternatively, state b) can be used to sort permutation $\sigma$.
This can be done by adding the $4$ to the left end next to the $5$,
and then adding the $1$ and sending both the $1$ and the 2 to the
output. The state of the deque is then $457$, and the $6$ and then
the three can be added to the right side before popping all of the
elements to the output. The state b) fails, however, to sort $\pi$
since the very next element, the $6$, cannot be placed without sandwiching
either the $5$ or the $2$.

Therefore, even though any permutation in the set $\left\{ \pi,\sigma\right\} $
could be sorted on a deque given complete information, it is possible
that in trying to sort a permutation from this set with incomplete
information we could fail because we are forced to make a choice about
the placement of the third element, and either choice will preclude
the possibility of sorting one of the permutations in this set. 
\end{proof}

The problem when sorting given incomplete information, as illustrated
in the above theorem, is that we must sometimes make a choice between
either of two possibilities for the placement of an incoming element
such that either choice will rule out the possibility of sorting some
subset of the permutations in $\mathcal{D}$. One wonders, therefore,
what are the necessary conditions for a choice that can affect the
sorting success.

\begin{thm}
In order for the player of a game of DEK to come across a choice
which could affect their scoring success, the following conditions
are necessary and sufficient.
\begin{enumerate}
\item The deque must already contain two distinct elements.  (Let
$i$ denote the smaller of these, and let $j$ denote the larger.)
\item The incoming element must be smaller than both end elements of the
deque.
\item There must be a gap of at least two elements between the value of
the incoming element and the smaller of the two end elements of the
deque.
\item The incoming element must not be the next element required by the output.
\item There must be an element larger than $i$ which is still in the input.
\item If the deques state is non-monotonic, then there must be an element
larger than $i$ but smaller than $j$ which is still in the input.
\end{enumerate}
\end{thm}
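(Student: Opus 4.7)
The plan is to prove the biconditional by treating the two directions separately: necessity (each of the six conditions is required for the player to face a meaningful choice) and sufficiency (if all six hold, then there are two continuations of the input that require opposite placements of $e$). Throughout, I will fix notation so that $L$ and $R$ are the current end elements of the deque with $i=\min(L,R)$ and $j=\max(L,R)$, and I will write $m$ for the next required output. Without loss of generality I will assume $L = i$, using the obvious reflective symmetry.

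For necessity I would analyze each condition in turn, showing that if it fails then both placements of $e$ yield the same sortability outcome on every continuation. Conditions 1 and 4 are immediate: a deque with fewer than two distinct elements is literally symmetric under the left/right choice, and an $e$ that is the next required output is popped right after being placed on either end. Condition 2 is a forcing argument: if $e > L$, then placing $e$ adjacent to $L$ forces $L$ into a sandwich unless the deque has a specific monotonic form, in which case one placement is forced and the other creates an immediate sandwich. Condition 3 uses a convergence argument: when $\min(L,R)=e+1$, the consecutive block $\{m, m+1, \ldots, e, e+1\}$ must be popped in order before anything else can be output, and a straightforward trace shows that after this block is consumed the two placements leave identical residual deques, so any continuation succeeds for both or fails for both. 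Conditions 5 and 6 are handled similarly: when no large enough input element remains to interact with $e$ on its vulnerable side, the structure of the surviving deque is effectively frozen. In particular, if condition 6 fails while the deque is non-monotonic, I would show that any input element $>j$ sandwiches under both placements (so sortability already fails), and if no such element exists then condition 5 also fails, reducing to that case.

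For sufficiency my approach is constructive, closely patterned after the $\pi=7526431$, $\sigma=7524163$ example of the preceding theorem. Assuming all six conditions hold, I will exhibit explicit continuations $\pi_L$ and $\pi_R$ of the input such that $\pi_L$ is sortable only if $e$ is placed on the left and $\pi_R$ only if $e$ is placed on the right. For $\pi_L$ the idea is to have the element $e' > i$ guaranteed by condition 5 arrive immediately after $e$; a direct check shows that placing $e$ on the right then makes it impossible to accommodate $e'$ on either side without sandwiching $i$ or $e$, while placing $e$ on the left leaves a safe outside slot for $e'$, after which the remaining small elements can be funneled off. For $\pi_R$ the construction runs off a carefully chosen short sequence of small elements first (using the gap from condition 3 and the nonterminal status of $e$ from condition 4) to reshape the deque, and then, in the non-monotonic case, uses the element $e'' \in (i,j)$ supplied by condition 6 as a hinge to demand precisely the non-monotonic shape produced by placing $e$ on the right. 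This mirrors how state b) in the preceding theorem's example unlocked room for the $6$ that state a) could not accommodate.

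I expect the main obstacle to be the sufficiency direction, and specifically the case split between monotonic and non-monotonic initial deques. The monotonic case is essentially a generalization of the preceding theorem's example and only requires condition 5. The non-monotonic case is more delicate: condition 6 must be invoked in an essential way to produce the witness continuation that favors placement on the right, because without an intermediate $e''$ still in the input the interior peak of the deque traps any subsequent large element regardless of which side $e$ was placed on. A secondary but nontrivial task is verifying that the constructed $\pi_L$ and $\pi_R$ are themselves deque-sortable (so the apparent choice is genuine, not vacuous), which I would do by explicitly exhibiting successful runs in the Rosenstiehl-Tarjan-Modified framework developed earlier in the paper.
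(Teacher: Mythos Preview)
Your overall architecture matches the paper's: necessity condition by condition, then sufficiency via two explicit witness continuations. The sufficiency sketch is close to what the paper does (the paper's $\pi_L$ is $x,z,\text{rest sorted}$ with $z>i$, and its $\pi_R$ is $x,\,i-1,\,\text{small elements sorted},\,z,\,\text{rest sorted}$), and your plan there should go through. But two of your necessity arguments have real gaps.

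For condition 3, ``gap of at least two elements'' means the failure case is $x\in\{i-1,i-2\}$, not just $x=i-1$. Your convergence argument only addresses $\min(L,R)=e+1$. The case $x=i-2$ is genuinely harder: the element $i-1$ is still in the input, and when it arrives it may land on either side, so the two placements of $x$ do \emph{not} lead to identical residual deques after the small block is consumed. The paper handles this with a one-sided domination argument rather than an equivalence: it shows that if placing $x$ next to $j$ succeeds, then placing $x$ next to $i$ also succeeds, by tracking the deque state at the moment $x$ is output and, in the $x=i-2$ subcase, \emph{inverting} all the intermediate placement choices to produce a mirrored successful run. Your ``identical residual deques'' claim is too strong and will not survive the $x=i-2$ case.

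For condition 6, your reduction is incorrect. You write that if the deque is non-monotonic and some input element exceeds $j$, then ``sortability already fails'' under both placements. That is false: the large element may arrive only after $i$ and enough of the interior have been popped to restore monotonicity, and then it can be accommodated. The correct argument (which the paper gives) is not that sortability fails, but that the choice is immaterial: since no input element lies in $(i,j)$, any input element exceeding $i$ in fact exceeds $j$ and so cannot arrive before $i$ is output; hence the input begins with a parallel-stack-sortable block of elements below $i$, for which the first placement is symmetric. You need this structural observation, not a sandwiching claim.
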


\begin{proof}(Necessity):
Clearly the deque state is equivalent (up to symmetry) regardless
of the player choice if the deque contains one or fewer elements.
Therefore, condition 1 is a necessary condition for being faced with
a substantive choice. 

Now suppose that the incoming element is larger than the smaller
of the two distinct end element of the deque. Clearly this incoming
element cannot be placed next to the smaller of the existing end elements,
or that smaller end element would become sandwiched and the game would
certainly be lost. Therefore, if there are two distinct end elements
on the deque and the incoming element is larger than either of them,
the players move is forced and no substantive choice exists.

Next, for condition 3, suppose that the smaller of the existing end elements of
the deque is $i$ and that the incoming element is $x=\left(i-1\right)$
or $x=\left(i-2\right)$ (so that no two element gap exists between
them). Then we claim that it is always a safe play to place the new
element next to $i$. 

Suppose that the permutation is sortable by placing $x$ next to the
the other, larger end element, $j$. Then, by placing $x$ next to
$j$ to get the state $iCjx$ (we assume wlog that $i$ is the left
end element), and then choosing future choices correctly, one will
eventually arrive at a point at which $x$ can be moved to the output.
Consider the state immediately after this move. No element can be to
the right of $j$ on the deque. No element less than $x$ can be on
the deque. No element greater than $i$ can be on the deque, since
the placement of such an element prior to the removal of $x$ would
have pinned either $i$ or $x$. Therefore, the state of the deque
must be either \begin{align*}
 & iCj &  & \text{or} &  & \left(i-1\right)iCj\quad\text{possible in the case where }x=\left(i-2\right)\end{align*}

In the former case, the only elements appearing after the point where $x$ appeared
and before the point where $x$ was popped were elements strictly less
than $x$. Therefore we could just as easily have placed $x$ next
to the end element $i$.

In the latter case, at the point when $\left(i-1\right)$ arrived,
the only other elements which had already arrived but had not already
moved to the output must lie in a sequence $S$ such that the deque
state at the moment of $\left(i-1\right)$'s arrival was $iCjxS$,
where $S$ is a sequence which is decreasing from left to right. None
of the elements arriving between $x$ and $\left(i-1\right)$ were
larger than $x$, however, so by placing $x$ next to the end element
$i$ and then inverting the placements of every element following
$x$ and preceding $\left(i-1\right)$, we could have the state $SxiCj$
at the time of $\left(i-1\right)$'s arrival. By then placing $\left(i-1\right)$
on the right end, and continuing to invert the placement of every
element received between $\left(i-1\right)$ and the movement of $x$
to the output pile, we see that it must also be safe to place $x$
next to $i$.  Thus no substantive choice is required if condition 3) does
not hold

Clearly, if $x$ is the next element required by the output then
doesn't matter where we place it since we can immediately get rid
of it. 

For condition 5, suppose that every element larger than $i$ has already
been moved out of the input. Then all such elements must already be
in the sequence $Cj$, and since we are assuming that we don't start
with a sandwich state (in which case clearly no substantive choice
can exist) the elements $n$ through $i$ must be ordered such that
they are in decending order starting from the element $n$ and reading
either right or left.

Therefore, the elements remaining in the input for any sortable permutation
must all be moved to the ends of the deque and thence to the output
before $i$ or any other element of $iCj$ is moved. This implies
that whatever remains on the input is a parallel stack sortable permutation
which can be sorted on the two parallel stacks radiating to the
left and to the right of $iCj$, and clearly the choice of which of the two
parallel stacks to add the first element to is arbitrary. 

Finally suppose that the deque is non-monotonic and that no element
between the values of $i$ and $j$ is on the input at the time that
$x$ arrives. Then for any sortable permutation, every element of
the input which is larger than $i$ must wait to arrive until after
$i$ has been moved to the output. Thus every such element must follow
every element smaller than $i$ in the input. Therefore, the input
has as a prefix some parallel stack sortable permutation consisting
of all elements less than $i$ and not yet in the output, and so once
again the placement of the element $x<i$ is unimportant.

Thus all six conditions are necessary for the player to be presented
with a substantive choice.

(Sufficiency): 
Suppose that all six conditions are met. 

\begin{sclm}
There is a permutation that is sortable only if $x$ is placed next
to the smaller end element, $i$.
\end{sclm}

\begin{proof}
Suppose first that the deque is monotonic. Let $z$ be some element
which is larger than $i$ and is still in the input. Then the permutation
in which the input consists of $xz$ followed by every remaining element
in sorted order is clearly sortable if $x$ is placed next to $i$
but not if it is placed next to $j$. 

Now suppose that the deque is non-monotonic. Then condition 6 guarantees
that there is some element $z$ which is in the input and has value
between $i$ and $j$. The same permutation is thus sortable if $x$
is placed next to $i$ but not if it is placed next to $j$.
\end{proof}

\begin{sclm}
There is a permutation that is sortable only if $y$ is placed next
to the larger end element, $j$.
\end{sclm}

\begin{proof}
Suppose first that the deque is monotonic. Consider the permutation
where $x$ is followed by $\left(i-1\right)$, then by every element
less than $x$ not already in the output in sorted order, then by
some $z>i$, and then by every remaining input element in sorted order.
If $x$ is placed next to $j$, then $(i-1)$ can be placed next to
$i$, and it is clear that the remainder of the permutation can be
successfully sorted from this state. If, however, $x$ is placed next
to $i$, then the $\left(i-1\right)$ must be placed adjacent to $j$.
After the sequence of elements less than $x$ arrives and departs,
the deque will still have a non-monotonic state with end elements
$i$ and $\left(i-1\right)$. Thus the element $z$ will necessarily
cause a sandwich.

Now suppose that the deque is non-monotonic. Let $z$ be the element
whose existence is guaranteed by condition 6. Consider the same input
permutation described above. Clearly this is still sortable given
the placement of $x$ adjacent to $j$. Once again, however, if $x$
is placed adjacent to $i$, the element $\left(i-1\right)$ must go
adjacent to $j$, and we end up with a non-monotonic stack with end
elements $i$ and $\left(i-1\right)$when $z$ arrives. 
\end{proof}

By the above two subclaims, whenever all six conditions are met, the
choice presented to the player is substantive.  Therefore these six conditions
are both necessary and sufficient.
\end{proof}

Having determined when choices matter, we want
to understand how to make the right choice. One obvious strategy for cases where we have sufficient
computational power is:

\begin{strat1}
Enumerate all possible remaining inputs, and make the
choice that leaves more of these winnable. 
\end{strat1}

After identifying this strategy, however, we realized that it amounts
to choosing based on which placement gives the player the most winnable
scenarios given omniscient information in the future. Thus this is
the optimal strategy for a modified version of DEK where the player
plays till their first substantive choice, makes that choice, and
then reveals the remainder of the input deck and trys to play on with
complete information. 

The actual optimal strategy of DEK play is this one.

\begin{strat2}
(optimal) Use a choice criteria $C$ such which will lead
to the most winnable scenarios when applied to this and all future
choices.
\end{strat2}

A priori, it seems possible that the scenarios which are winnable
from one choice are more or less evenly split beneath a future choice,
whereas the scenarios winnable from the alternative choice are not
so limited by future choices. Thus we might imagine that these two
strategies could disagree. In order to try to find an example where
the disagreed, I wrote a persistent version of Rosenstiehl-Tarjan-Modified
and then used this to calculate the decision of each strategy at each
substantive choice encountered in a search of the permutation tree.

Surprisingly, for the small cases I tested (up to $n=12$), we did
not find any example where the selections made by Strategies 1 and
2 differ.

\section{Conclusions and Acknowledgements}
To sum up the main results of this work:  We examined the deque sortability testing algorithm presented thirty 
years ago by Rosenstiehl and Tarjan, and identified an error in this algorithm.  To the best 
of our knowledge, this flaw was previously unknown.  (We have examined works which 
cite Rosenstiehl and Tarjan's algorithm, and none of them address this issue.)
Sadly, we have been unsuccessful in our attempts to contact the authors directly.
After identifying the flaw in the Rosenstiehl and Tarjan's algorithm, we proposed a
solution and then offered a proof that the modified version of the algorithm is indeed
correct.

We then developed a new algorithm for computing the number of permutations of
a given size $n$ which are sortable on either a pair of parallel stack or on a deque,
which has a greatly improved asymptotic runtime when compared with the previous 
best approach to making these calculations.  Using our new algorithm we calculated
the number of sortable permutations for several lengths beyond what was previously
known.

Finally, we have presented a description of exactly when, in attempting to sort
a permutation given incomplete information, one must make a choice which 
effects the set of permutations for which the sorting computation being attempted
can succeed.

I would like to thank both of my advisors on this project: Scot Drysdale, who has always
been very supportive has given excellent feedback in putting together this project, 
and Peter Doyle, without whom this work and my time at Dartmouth in general would
have been greatly impoverished.

I would also like to thank Professor Prasad Jayanti for kindly serving on my thesis 
committee.

Finally, I want to thank Sergi Elizalde who taught my combinatorics courses addressing
permutations and permutation patterns, and who provided great expertise that I should 
have taken advantage of sooner.

\section*{Appendix A}
This table lists our computed values of $\left|\mathcal{D}_{n}\right|$
for $n=1,\ldots,21$. The first fourteen terms of this sequence appear in the online encyclopedia
of integer sequences as sequence A182216.\begin{align*}
 & 1 &  & 51069582\\
 & 2 &  & 365879686\\
 & 6 &  & 2654987356\\
 & 24 &  & 19473381290\\
 & 116 &  & 144138193538\\
 & 634 &  & 1075285161294\\
 & 3762 &  & 8076634643892\\
 & 23638 &  & 61028985689976\\
 & 154816 &  & 463596673890280\\
 & 1046010 &  & 3538275218777642\\
 & 7239440\end{align*}

\section*{Appendix B}
This table lists our computed values of $\left|\mathcal{C}_{n}\right|$
for $n=1,\ldots,22$. \begin{align*}
 & 1 &  & 24180340\\
 & 2 &  & 161082639\\
 & 6 &  & 1091681427\\
 & 23 &  & 7508269793\\
 & 103 &  & 52302594344\\
 & 513 &  & 368422746908\\
 & 2760 &  & 2620789110712\\
 & 15741 &  & 18806093326963\\
 & 93944 &  & 136000505625886\\
 & 581303 &  & 990406677136685\\
 & 3704045 &  & 7258100272108212\end{align*}

\bibliographystyle{plain}
\bibliography{thesis_references}

\end{document}